\newtheorem{thm}{Theorem}[section]
\newtheorem{lem}[thm]{Lemma}
\newtheorem{prop}[thm]{Proposition}
\theoremstyle{definition}
\newtheorem{defn}[thm]{Definition}
\theoremstyle{remark}
\numberwithin{equation}{section}
\newcommand{\norm}[1]{\left\Vert#1\right\Vert}
\newcommand{\abs}[1]{\left\vert#1\right\vert}
\newcommand{\cx}{{\mathbb{C}}}
\newcommand{\dist}{{\mathrm { dist}}}
\newcommand{\tensor}{\otimes}
\newcommand{\sng}[1]{{#1}^{\rm sng}}
\newcommand{\reg}[1]{{#1}^{\rm reg}}
\newcommand{\psh}{plurisubharmonic }
\newcommand{\dbar}{\overline{\partial}}
\newcommand{\tmop}[1]{\ensuremath{\operatorname{#1}}}
\renewcommand{\Re}{\tmop{Re}}
\title{Condition R and holomorphic mappings of domains with generic corners}
\thanks{Kaushal Verma  was supported in part by the DST SwarnaJayanti fellowship 2009--2010 and the UGC--CAS Grant}
\subjclass{Primary: 32H40}
\author{Debraj Chakrabarti}
\address{Department of Mathematics, Central Michigan University, Mt. Pleasant,  MI 48859,  USA}
\email{chakr2d@cmich.edu}
\author{Kaushal Verma}
\address{Department of Mathematics, Indian Institute of Science, Bengaluru-560012, India}
\email{kverma@math.iisc.ernet.in}
\begin{document}
\begin{abstract}A piecewise smooth domain is said to have generic corners if the corners are generic CR manifolds.
It is shown that a biholomorphic mapping from a piecewise smooth pseudoconvex domain with generic corners in complex Euclidean space
that satisfies Condition R to another domain  extends as a smooth diffeomorphism of the  respective 
closures if and only if the target domain is also piecewise smooth  with generic corners and satisfies Condition R.
Further it is shown that a proper map from a domain with generic corners satisfying Condition R to a product domain of the same dimension 
extends continuously to the closure of the source domain in such a way that the extension is smooth on the smooth part of the boundary.
In particular, the existence of such a proper mapping  forces the smooth part of the  boundary of the source to be 
Levi degenerate.
\end{abstract}

\maketitle
\section{Introduction}
\noindent The question of continuous or smooth  extension to the boundary of holomorphic maps is of central importance in complex analysis.
One significance of such extension lies in the fact that it reduces the  difficult problem of classification of domains in $\cx^n$, $n\geq 2$ up to biholomorphism,
or the more general problem of deciding the existence of a proper map between two given domains,
to the problem of study of CR invariants of the boundary hypersurfaces. After the fundamental result in this direction  of Fefferman (\cite{fef}) giving smooth extension 
up to the boundary of a biholomorphic map between strictly pseudoconvex domains, there were obtained  far reaching generalizations to proper maps between 
smoothly bounded pseudoconvex  domains (e.g., \cite{BL, B2,Be1,Be2,BC,DF}.) In these investigations, the hypothesis on the source domain $D$ of the proper map  is 
that it satisfies {\em Condition R}:   the {\em Bergman projection}, 
the orthogonal projection from the Hilbert space $L^2(D)$ of square integrable functions to the closed subspace $\mathcal{H}(D)$
of holomorphic square  integrable functions,  maps a function smooth up to the boundary to a holomorphic function smooth up to the boundary.

\medskip

In this note we consider a class of piecewise smooth domains to which the techniques of Bell-Catlin-Diederich-Fornaess-Ligocka et al. mentioned above extend
in a natural way. By definition, a {\em piecewise smooth domain} is an intersection of finitely many smoothly bounded domains in which all 
possible boundary intersections are  transverse.  The class of domains we will be considering  are the {\em domains with generic corners} 
defined below. Such domains have been considered by various authors (see \cite{Ba, forst,We}.) In \cite{We}, Webster 
considered holomorphic mappings defined on domains with {\em real analytic} generic corners, and a 
reflection principle for such  corners was developed. These ideas were subsequently developed by Forstneri\v{c} (see \cite{forst}.)
A crucial estimate of Bell for holomorphic functions on smooth domains was generalized by Barrett to this class of domains
(see \cite{Ba}, and Lemma~\ref{lem-bell} below.)
In a previous article (\cite{prop}) we considered the extension of  proper mappings of equidimensional  products of smoothly bounded domains.
These products are examples of domains with generic corners,  and here we generalize  some of the results 
of \cite{prop} to the wider class. We now formally define these domains:
\begin{defn}\label{defn-main}
Let $\Omega$ be a bounded domain in $\cx^n$ that may be written as an 
intersection $\bigcap_{j=1}^N \Omega_j$ of smooth domains such that
\begin{enumerate}[(i)]
\item all intersections of the boundaries $b\Omega_j$ are transverse.
\item for each subset $S\subset\{1,\dots,N\}$ the intersection $B_S=\bigcap_{j\in S} b\Omega_j$, if non-empty,
is a CR manifold of CR-dimension $n-\abs{S}$.
\end{enumerate}
We call such a domain a {\em domain with generic corners.}
\end{defn}
Our first result is the following:

\noindent 

\begin{thm}\label{thm-biholo}
Suppose that $D\subset \cx^n$ is a pseudoconvex domain with generic corners which satisfies Condition R. If $G\subset\cx^n$ 
is a domain and $f:D\to G$ is a biholomorphic map, then the following are equivalent:

(1) $f$ extends as a $\mathcal{C}^\infty$-smooth diffeomorphism from $\overline{D}$ to $\overline{G} $.

 (2) $G$ is a domain with generic corners and satisfies Condition R.
 
\end{thm}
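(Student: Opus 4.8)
The plan is to treat the two implications separately, with $(2)\Rightarrow(1)$ being the substantial one. Throughout I write $u=\det f'$ for the holomorphic Jacobian of $f$, $v=\det(f^{-1})'$ for that of the inverse, and I use the unitary \emph{transformation operator} $\Phi_f\colon L^2(G)\to L^2(D)$, $\Phi_f\psi=u\cdot(\psi\circ f)$, whose unitarity is the holomorphic change-of-variables formula and whose inverse is $\Phi_{f^{-1}}$. The single algebraic fact driving everything is Bell's transformation law $P_D\circ\Phi_f=\Phi_f\circ P_G$ relating the Bergman projections $P_D,P_G$ of the two domains.

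For $(1)\Rightarrow(2)$, which I expect to be routine, suppose $f$ extends to a smooth diffeomorphism $\overline D\to\overline G$. Transversality of the boundary intersections and the CR-dimension of each stratum $B_S$ are invariants of a CR-diffeomorphism of the boundary; since $f|_{bD}$ is such a diffeomorphism (being the boundary values of a biholomorphism that extends smoothly with smooth inverse), $G$ inherits the structure of a domain with generic corners in the sense of Definition~\ref{defn-main}. For Condition R on $G$: because both $f$ and $f^{-1}$ extend smoothly, $\Phi_f$ maps $C^\infty(\overline G)$ into $C^\infty(\overline D)$ and $\Phi_{f^{-1}}=\Phi_f^{-1}$ maps $C^\infty(\overline D)$ into $C^\infty(\overline G)$. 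Given $g\in C^\infty(\overline G)$, the transformation law yields $P_G g=\Phi_{f^{-1}}\bigl(P_D(\Phi_f g)\bigr)$, whence Condition R for $D$ forces $P_G g\in C^\infty(\overline G)$.

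For $(2)\Rightarrow(1)$ I would follow the Bell--Ligocka scheme, adapted to corners. First I would establish the corner analogue of Bell's density lemma for a domain with generic corners satisfying Condition R: every $h\in\mathcal H(G)\cap C^\infty(\overline G)$ equals $P_G\psi$ for some $\psi\in C_c^\infty(G)$. This is the step where the generic-corner hypothesis on $G$ and Barrett's estimate (Lemma~\ref{lem-bell}) enter decisively, since one needs the Bergman projection of $G$ to be controlled up to the singular strata. Granting this, and noting that $G$ is bounded so that the coordinate functions $w_1,\dots,w_n$ lie in $\mathcal H(G)\cap C^\infty(\overline G)$, choose $\psi\in C_c^\infty(G)$ with $P_G\psi=1$ and $\psi_j\in C_c^\infty(G)$ with $P_G\psi_j=w_j$. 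Since $f$ is proper, $\psi\circ f$ and $\psi_j\circ f$ have compact support in $D$, so $\Phi_f\psi$ and $\Phi_f\psi_j$ lie in $C_c^\infty(D)\subset C^\infty(\overline D)$. The transformation law gives $P_D(\Phi_f\psi)=\Phi_f(1)=u$ and $P_D(\Phi_f\psi_j)=\Phi_f(w_j)=u\cdot f_j$, so Condition R for $D$ places $u$ and each $u\,f_j$ in $C^\infty(\overline D)$. Running the symmetric argument with the roles of $D$ and $G$ interchanged—now using Condition R on $G$, the density lemma on $D$, and properness of $f^{-1}$—yields $v$ and each $v\cdot(f^{-1})_k$ in $C^\infty(\overline G)$.

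It remains to remove the Jacobian factor. On the open set where $u\neq 0$ the identity $f_j=(u\,f_j)/u$ shows $f_j$ is smooth up to $bD$; the difficulty is the behaviour over the boundary zero set $Z=\{u=0\}\cap bD$. I would first extract a continuous extension of $f$ to $\overline D$ using the boundedness of $f$ together with the continuity of $u$ and $u\,f_j$, controlling the cluster set over the nowhere-dense set $Z$, and symmetrically for $f^{-1}$; the two mutually inverse continuous extensions then show $f$ is a homeomorphism of $\overline D$ onto $\overline G$ carrying $bD$ to $bG$. Smoothness is then upgraded off $Z$ as above and across $Z$ by combining the smoothness of $u,u f_j,v,v(f^{-1})_k$ with the mapping relations, the crux being the behaviour of $Z$ where it meets the corners $B_S$. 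Once $f$ and $f^{-1}$ are both seen to extend smoothly, the relations $f\circ f^{-1}=\mathrm{id}$ and $f^{-1}\circ f=\mathrm{id}$ persist to the closures by continuity, so the extensions are mutually inverse and $f$ is a smooth diffeomorphism $\overline D\to\overline G$. I expect the main obstacle to be exactly the two corner-sensitive points: proving the density lemma and the up-to-the-corner regularity of $P_G$ via Lemma~\ref{lem-bell} and genericity, and controlling the division by $u$ near the intersection of its boundary zero set with the singular strata.
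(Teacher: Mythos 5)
Your implication $(1)\Rightarrow(2)$ is correct and takes a mildly different route from the paper: you push Condition~R through the operator identity $P_G=\Phi_{f^{-1}}\circ P_D\circ\Phi_f$ directly, using that $\Phi_f$ and $\Phi_{f^{-1}}$ preserve smoothness up to the closures because $f^{\pm 1}$ extend smoothly; the paper instead verifies the Bergman-kernel derivative estimate of Proposition~\ref{prop-condnr} for $K_G$ via the kernel transformation formula and the distortion estimate. Both are valid; yours is the more elementary of the two.

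The gap is in $(2)\Rightarrow(1)$, at exactly the step you flag as decisive. The ``density lemma'' you invoke --- every $h\in\mathcal{H}(G)\cap\mathcal{C}^\infty(\overline G)$ equals $P_G\psi$ for some $\psi\in\mathcal{C}^\infty_c(G)$ --- is false, already for the unit disc: if $\psi$ is supported in $\{\abs{z}\le r\}$ with $r<1$, then $P\psi(w)=\pi^{-1}\int \psi(z)(1-w\bar z)^{-2}\,dV(z)$ extends holomorphically to $\{\abs{w}<1/r\}$, so every element of $P(\mathcal{C}^\infty_c)$ continues holomorphically past the boundary, whereas $\mathcal{H}^\infty$ of the disc contains functions (e.g.\ $\sum_k e^{-\sqrt{k}}z^k$) extending across no boundary point. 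Compact support of the preimage is precisely what cannot be arranged; what can be arranged is finite-order vanishing, and that is the content of Lemma~\ref{lem-bell}: $g_s=\Phi^s h$ satisfies $Qg_s=h$ and $\abs{g_s}\lesssim d(\cdot)^s$. But then $g_s\circ f$ is not compactly supported in $D$, and to conclude $u\cdot(g_s\circ f)\in\mathcal{C}^k(\overline D)$ one must beat the Cauchy-estimate blow-up $\abs{D^\alpha f_j}\lesssim \dist(\cdot,bD)^{-\abs{\alpha}}$ by the vanishing of $g_s$ transported through $f$; this requires the distortion estimate of Proposition~\ref{prop-distest}, hence the Hopf lemma on domains with generic corners and the Diederich--Fornaess exhaustions. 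None of this appears in your proposal, and it is the analytic heart of the implication (the paper packages it as Lemma~\ref{lem-jacobian}). Separately, your concern about the boundary zero set of $u$ is unnecessary in the biholomorphic setting: once $u\in\mathcal{C}^\infty(\overline D)$ and $v=\det(f^{-1})'\in\mathcal{C}^\infty(\overline G)$ are known, $v\circ f=1/u$ is bounded on $D$, so $\abs{u}$ is bounded below and $u$ is zero-free on $\overline D$; division by $u$ is then harmless and $f_j=(uf_j)/u\in\mathcal{C}^\infty(\overline D)$ everywhere, exactly as in the paper.
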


\noindent Therefore  the property of a domain that it satisfies Condition R  and has generic corners is invariant under holomorphic maps smooth up to the boundary.
As a result,  the classification of domains in this class is reduced to the study of the boundaries.  In Section~\ref{sec-examples} we consider some 
examples of domains satisfying the hypotheses of  Theorem~\ref{thm-biholo}. These are also the  hypotheses on the source domain 
$D$ in Theorem~\ref{thm-proper} below.  In a future work, we will consider further examples of this class of domains.

\medskip

\medskip

For a domain $\Omega$ with generic corners, let $b \sng{\Omega}\subset b \Omega$ consist of all those points that lie on the 
intersection of two or more boundaries $b \Omega_j$ and set $b \reg{\Omega} = b \Omega \setminus b\sng{ \Omega}$.

\begin{thm}\label{thm-proper}
Let $D \subset \mathbb C^n$ be a pseudoconvex domain with generic corners  and $G = G_1 \times G_2 \times \ldots \times G_k \subset \mathbb C^n$ a product domain where each $G_j 
\subset \mathbb C^{\mu_j}$ is smoothly bounded and $\mu_1 + \mu_2 + \ldots + \mu_k = n$. Assume that $D$ satisfies Condition R and let $f : D \rightarrow G$ be a proper holomorphic 
mapping. Then $f$ admits a continuous extension to $\overline{D}$ in such a way that the extension is $C^{\infty}$ smooth on $b \reg{D}$.
\end{thm}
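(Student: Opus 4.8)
The plan is to drive the Bell--Ligocka transformation machinery with Condition R of the source $D$ and the Bell--Barrett estimate of Lemma~\ref{lem-bell}, obtaining $C^\infty$ regularity on $b\reg{D}$, and then to handle the behaviour at the corners $b\sng{D}$ separately; I expect the latter to be the main obstacle.

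Since $D$ and $G$ both lie in $\cx^n$, the proper map $f$ is a finite branched covering. First I would set $u=\det f'$, its holomorphic Jacobian, holomorphic on $D$ and not identically zero, with zero set $V=\{u=0\}$ a proper analytic subvariety. The engine is Bell's transformation law
$$P_D\bigl(u\cdot(g\circ f)\bigr)=u\cdot\bigl((P_Gg)\circ f\bigr),\qquad g\in L^2(G),$$
relating the Bergman projections $P_D$ and $P_G$. The key observation is that properness of $f$ makes $g\circ f$ compactly supported in $D$ whenever $g\in C^\infty_c(G)$, so that $u\cdot(g\circ f)\in C^\infty_c(D)\subset C^\infty(\overline D)$; Condition R on $D$ then yields $P_D\bigl(u\cdot(g\circ f)\bigr)\in C^\infty(\overline D)$, and hence
$$u\cdot\bigl((P_Gg)\circ f\bigr)\in C^\infty(\overline D)\qquad\text{for every }g\in C^\infty_c(G).$$

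Next I would exploit the product structure $G=G_1\times\cdots\times G_k$, which gives $P_G=\bigotimes_{j=1}^k P_{G_j}$. Since each $G_j$ is smoothly bounded, a standard lemma of Bell provides, on each factor, functions in $C^\infty_c(G_j)$ whose Bergman projection is any prescribed function holomorphic near $\overline{G_j}$; tensoring these, I can select $g\in C^\infty_c(G)$ with $P_Gg=1$ and with $P_Gg$ equal to each coordinate monomial $w^\beta$. Feeding these choices into the displayed relation gives $u\in C^\infty(\overline D)$ together with
$$u\cdot(w^\beta\circ f)\in C^\infty(\overline D)\qquad\text{for every multi-index }\beta,$$
in particular $u\,f_i\in C^\infty(\overline D)$ for every component $f_i=w_i\circ f$. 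Away from $\overline V$ one simply divides. To reach all of $b\reg{D}$ one localizes: near a point of $b\reg{D}$ the domain $D$ coincides with a single smooth $\Omega_j$, the components $f_i$ are bounded holomorphic, and the Bell--Barrett estimate of Lemma~\ref{lem-bell} bounds their derivatives in terms of the weighted norms controlled by $u$ and the $u\,f_i$; a division across the boundary zeros of the holomorphic, boundary-smooth function $u$ (as in Bell--Catlin) then gives $f\in C^\infty$ up to $b\reg{D}$.

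It remains to produce a continuous extension to all of $\overline D$, and here I expect the genuine difficulty. On the dense open set $b\reg{D}$ the extension is already smooth and $f$ is globally bounded, so the task is to pin the cluster set of $f$ at each corner point $p\in B_S=\bigcap_{j\in S}b\Omega_j$ to a single value and to check that the one-sided limits from the adjacent faces agree. This is exactly where the hypothesis that the corners are generic, rather than the weaker requirement of mere piecewise smoothness, must be used: because $B_S$ is a CR manifold of CR-dimension $n-\abs{S}$ and the faces $b\Omega_j$, $j\in S$, meet transversally, the smooth boundary values of $f$ along the adjacent faces propagate to $B_S$ as CR data, and the transversality together with the uniform derivative bounds from Lemma~\ref{lem-bell} forces these values to glue to a single continuous limit at $p$. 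The matching at the lower-dimensional strata $B_S$ is the crux of the argument; the smooth-boundary Bell theory supplies everything on $b\reg{D}$ but gives no information a priori at $b\sng{D}$, so it is the generic CR structure of the corners that must carry the continuity there.
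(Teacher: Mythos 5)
Your first half is a legitimate variant of the paper's argument: where the paper applies the Bell operator $\Phi^s$ of Lemma~\ref{lem-bell} on $G$ to produce $g_s=\Phi^s h$ vanishing to high finite order at $bG$, and then needs the distortion estimate of Proposition~\ref{prop-distest} (hence the Hopf lemma on domains with generic corners) to see that $u\cdot(g_s\circ f)$ lies in $\mathcal{C}^k(\overline D)$, you instead ask for $g\in C^\infty_c(G)$ with $P_Gg$ a prescribed polynomial, which makes $u\cdot(g\circ f)$ compactly supported and bypasses the distortion estimate for this step. That works provided you actually quote or prove Bell's representation lemma ($P_{G_j}g=h$ with $g\in C^\infty_c(G_j)$ for $h$ holomorphic near $\overline{G_j}$) on each smooth factor; the net output, $u\in\mathcal{H}^\infty(D)$ and $u\cdot f^\beta\in\mathcal{H}^\infty(D)$ for all multi-indices $\beta$, is the same as what the paper's Lemma~\ref{lem-jacobian} delivers.

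The genuine gap is in the passage from $u\cdot f^\beta\in\mathcal{H}^\infty(D)$ to the conclusion, and it occurs \emph{before} the corners: you never establish that $u=\det f'$ vanishes to at most \emph{finite} order at each point of $bD$. Without this, ``dividing'' by $u$ is impossible wherever $\overline{\{u=0\}}$ meets $bD$ (which can be a large set), so neither the smoothness on $b\reg{D}$ via the strong division theorem of Bell--Catlin/Diederich--Fornaess nor the continuity anywhere on $b\sng{D}$ follows. This finite-order vanishing is a substantive step (\cite[Lemma~5.5]{prop}, resting on the distortion estimates of Proposition~\ref{prop-distest}, i.e.\ on the Hopf lemma for generic corners), so you cannot discard that circle of ideas. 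Moreover, your proposed mechanism at the corners --- that the boundary values on the faces ``propagate as CR data'' to $B_S$ and are ``forced to glue'' by transversality and ``uniform derivative bounds from Lemma~\ref{lem-bell}'' --- is not an argument: Lemma~\ref{lem-bell} gives no derivative bounds on $f$, and no gluing mechanism is exhibited. The paper's route at a corner $p$ is entirely different and much more direct: since $u$ does not vanish to infinite order at $p$ and $u\cdot f_i^N\in\mathcal{H}^\infty(D)$ for all $N$, the weak division theorem \cite[Lemma~10]{DF} yields continuity of each $f_i$ at $p$, the only geometric input being a complex line through $p$ entering $D$ transversally to the tangent cone --- available at a generic (indeed any transverse piecewise smooth) corner. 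Your plan, as written, would stall exactly at the step you correctly identify as the crux.
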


\noindent It is possible to prove continuous extension of holomorphic maps between piecewise smooth domains under  hypotheses different from those used in Theorems~\ref{thm-biholo} and \ref{thm-proper}.
Piecewise smooth pseudoconvex domains that admit plurisubharmonic peak points on their boundaries were considered by Berteloot (\cite{Be}) and H\"{o}lder 
continuity at the boundary for proper holomorphic mappings between such domains was established. A similar result that relied on estimates for the Carath\'{e}odory metric on 
strictly pseudoconvex piecewise smooth domains was proved by Range (\cite{Ra}). 

One interesting question that Theorem~\ref{thm-proper} leaves unresolved is whether we can conclude from the hypotheses if
the source $D$ itself has a product structure, i.e., if there is a biholomorphic map $F:D\to F(D)$ onto a product domain $F(D)\subset\cx^n$,
where $F$ extends to a diffeomorphism from $\overline{D}$ to $\overline{F(D)}$. It would be interesting to know if this indeed is the case.

\medskip

\noindent {\em Acknowledgements:} We thank David Barrett for drawing attention to the class of domains with generic corners and
 pointing out that the results of \cite{prop} could be generalized to this class, and  Steven Bell for his helpful comments. Indeed
 much of this article is based on their ideas.  We thank the referee for  many helpful suggestions. Debraj Chakrabarti also thanks  Mythily Ramaswamy, 
 the Dean of TIFR CAM, Bangalore, for her support and encouragement of this research.

\section{Bell Operator} 

\noindent Let $\Omega$ be a domain with generic  
corners in $\cx^n$ and let  $N$ and $\Omega_j$ have the same meaning as
in Definition~\ref{defn-main}.  Suppose that $r_j$ (where $j=1,\dots, N$)
is a defining function of the domain $\Omega_j$, i.e., $r_j$ is a smooth function on $\cx^n$ 
such that $ \Omega_j= \{r_j<0\}$ and $dr_j$ is nonzero at each point of $b\Omega_j$. Then the conditions
(i) and (ii) in Definition~\ref{defn-main} may be rephrased as follows: for each point $p$ such that 
\[  r_{j_1}(p)= r_{j_2}(p)=\dots= r_{j_k}(p)=0\]
we have
\[ dr_{j_1}(p)\wedge dr_{j_2}(p)\wedge \dots \wedge dr_{j_k}(p)\not=0\]
and also
\begin{equation}\label{eq-c-trv}  
\dbar{}r_{j_1}(p)\wedge \dbar r_{j_2}(p)\wedge \dots \wedge \dbar r_{j_k}(p)\not=0.
\end{equation}

\begin{lem}[cf. Barrett \cite{Ba1,Ba}]\label{lem-bell} Let  $s=(s_1,\dots, s_N)$ be a tuple of non-negative integers. There is a linear differential operator
$\Phi^s$ with smooth coefficients defined on $\overline{\Omega}$ such that  for all $f\in \mathcal{C}^\infty(\overline{\Omega})$,
\begin{enumerate}[(i)]
\item $P\Phi^s f=P f$  and
\item $\abs{\Phi^s f(z)}\leq C \norm{f}_{\mathcal{C}^{\abs{s}}} d(z)^s$, where  $\abs{s}= \sum_{j=1}^N s_j$ and 
 \[ d(z)^s =d_1(z)^{s_1}\dots d_N(z)^{s_N},\]
where $d_j(z)$ is the distance from the point $z$ to $b\Omega_j$.
\end{enumerate}
\end{lem}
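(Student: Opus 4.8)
The plan is to reduce the statement to the single-hypersurface construction of Bell and then iterate over the $N$ defining functions, using the genericity hypothesis to keep the iterations from interfering. The foundation is the integration-by-parts identity: if $u\in\mathcal C^1(\overline{\Omega})$ vanishes on $b\Omega$ and $h$ is holomorphic and square integrable, then $\langle \partial u/\partial z_k, h\rangle_{L^2(\Omega)}=0$, because the boundary term in Green's formula is killed by $u|_{b\Omega}=0$ while the interior term vanishes since $\partial\overline h/\partial z_k=0$. Hence $\partial(\chi a u)/\partial z_k$ lies in the kernel of $P$ for every smooth $a$, cutoff $\chi$, and boundary-vanishing $u$; this is the mechanism that lets us modify $f$ without changing $Pf$.

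First I would build, for a single smooth domain $\{r<0\}$ and a fixed positive integer $m$, an operator $\Psi^m$ of order $m$ with $P\Psi^m f=Pf$ and $\Psi^m f=O(d^m)$. Fix a $(1,0)$ field $L=\sum_k a_k\,\partial/\partial z_k$ with $Lr=1$ near $b\Omega$ (possible since $\partial r\neq 0$ there), and set $M=L+\beta$ with $\beta=\sum_k \partial a_k/\partial z_k$; after inserting a cutoff equal to $1$ near $b\Omega$ to make the coefficients globally smooth, $Mw$ becomes a sum of $\partial/\partial z_k$ of boundary-vanishing functions, hence lies in $\ker P$ whenever $w|_{b\Omega}=0$. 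Using $Lr=1$ one checks $M(w_p r^p)=p\,w_p r^{p-1}+(Mw_p)r^p$, so matching Taylor coefficients in powers of $r$ I can solve the triangular recursion $(p+1)w_{p+1}+Mw_p=f_p$ (with $w_0=0$) for smooth $w_1,\dots,w_m$, giving $w=\sum_{p=1}^m w_p r^p$, which vanishes on $b\Omega$ and satisfies $Mw\equiv f\pmod{r^m}$. Then $\Psi^m f:=f-Mw$ works: $P\Psi^m f=Pf$ by the identity above, $\Psi^m f=O(d^m)$ by construction, and since $w$ uses derivatives of $f$ up to order $m-1$ one gets $\abs{\Psi^m f(z)}\leq C\norm{f}_{\mathcal C^m}d(z)^m$.

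The heart of the matter, and the step that genuinely uses \emph{generic} corners, is to compose these single-hypersurface operators without destroying the vanishing already gained on the other boundaries. I would set $\Phi^s=\Phi^{s_1}_1\circ\cdots\circ\Phi^{s_N}_N$, where $\Phi^{s_j}_j$ is the operator $\Psi^{s_j}$ built from $r_j$; then $P\Phi^s f=Pf$ follows by applying the $P$-invariance $N$ times. The difficulty is that $\Phi^{s_j}_j$ is a differential operator, so a priori it lowers the order of vanishing of its input along $b\Omega_i$ for $i\neq j$. To prevent this I would choose the complex normal field $L_j$ for $\Omega_j$ so that, besides $L_j r_j=1$ near $b\Omega_j$, it satisfies $L_j r_i=0$ near $b\Omega_i$ for every $i\neq j$. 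This system is solvable precisely because of the genericity condition~\eqref{eq-c-trv}: at any corner point the forms $\partial r_{j_1},\dots,\partial r_{j_k}$ are linearly independent, so a $(1,0)$ vector realizing the prescribed pairings exists, and the pointwise solutions patch to a global smooth field by a partition of unity (the conditions being affine in $L_j$). With $L_j r_i=0$ one has $L_j(r_i^{s_i}\phi)=r_i^{s_i}L_j\phi$, so $\Phi^{s_j}_j$ carries functions that are $O(d_i^{s_i})$ to functions still $O(d_i^{s_i})$.

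Granting this tangency, the iteration delivers a function $\Phi^s f$ that is simultaneously $O(d_j^{s_j})$ for every $j$. By the transversality condition (i), near any corner the $r_j$ form part of a smooth coordinate system, so a smooth function vanishing to order $s_j$ along each $\{r_j=0\}$ is divisible by $\prod_j r_j^{s_j}$; hence $\abs{\Phi^s f(z)}\leq C\prod_j d_j(z)^{s_j}=C\,d(z)^s$. Tracking derivatives through the $\abs{s}$ compositions produces the norm $\norm{f}_{\mathcal C^{\abs s}}$, which is exactly the estimate in (ii), while (i) is already in hand. I expect the main obstacle to be precisely the corner interaction addressed above: arranging that the single-hypersurface operators be mutually compatible so the separate vanishing orders coexist, which is where the generic (rather than merely transverse) nature of the corners is indispensable.
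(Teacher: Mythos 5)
Your proposal is correct in substance and rests on the same two pillars as the paper's proof --- the genericity condition \eqref{eq-c-trv} is used to manufacture ``dual'' complex normal fields that are complex-tangent to the other hypersurfaces, and Bell's integration-by-parts mechanism kills the correction term under the Bergman projection --- but the construction is organized quite differently. The paper does not compose single-hypersurface operators: it chooses $(0,1)$ fields $T_j$ with $T_jr_k=\delta_{jk}$ near $b\Omega_j\cap b\Omega_k$, takes a partition of unity $\{\chi_S\}$ indexed by subsets $S\subset\{1,\dots,N\}$, and writes down the single closed formula $\Phi^sf=\sum_S\bigl(\prod_{j\in S}r_j^{s_j}/s_j!\bigr)\bigl(\prod_{j\in S}(T_j^*)^{s_j}\bigr)(\chi_Sf)$, from which estimate (ii) is read off immediately and (i) is proved by peeling off one adjoint at a time against a holomorphic $g$, using $T_{j_0}g=0$. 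Your iterated scheme $\Phi^s=\Psi_1^{s_1}\circ\cdots\circ\Psi_N^{s_N}$ with the tangency conditions $L_jr_i=0$ buys a reduction to the classical one-hypersurface recursion, at the cost of the bookkeeping you flag at the end: what must be propagated through the composition is not the pointwise bound $O(d_i^{s_i})$ --- a differential operator of positive order does not preserve a pointwise vanishing rate --- but the factorization $r_i^{s_i}\cdot(\text{smooth})$ together with control of the $\mathcal{C}^k$ norm of the cofactor; fortunately your recursion outputs exactly $\Psi^mf=-(Mw_m)\,r^m$ with $Mw_m$ an order-$m$ operator applied to $f$, so the factorized form does persist and the induction closes. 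The one step you state too quickly is the foundational identity $\langle \partial u/\partial z_k,h\rangle=0$: an element $h$ of $\mathcal H(\Omega)$ is only square-integrable and need not have boundary values, so the boundary term in Green's formula is not literally defined. The paper repairs this by integrating by parts over the exhaustion $\Omega_\epsilon$ and choosing a sequence $\epsilon_i\to0$ along which $\int_{b\Omega^{\epsilon_i}}|h|^2\,dS=o(\epsilon_i^{-1})$, which the factor of $\epsilon$ gained from $u|_{b\Omega}=0$ then beats; with that standard insertion your argument is complete.
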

\begin{proof}Thanks to \eqref{eq-c-trv}, near  each $p\in \cx^n$ we can find $N$ vector fields $T_1^{(p)},\dots, T_N^{(p)}$  of type $(0,1) $ 
such that 
$T^{(p)}_j r_k \equiv\delta_{jk}$ in a neighborhood of $p$ when $r_j(p)=r_k(p)=0$. By a partition of unity argument, we obtain vector fields $T_j$ , $j=1,\dots N$
on $\cx^n$ of type $(0,1)$ such that $T_j r_k\equiv \delta_{jk}$ on a neighborhood $U_{jk}$ of $b\Omega_j\cap b\Omega_k$. (Note that if $j=k$,
this means that $T_j r_j\equiv 1$ near $b\Omega_j$.)

\medskip

For a  subset $S\subset \{1,\dots, N\}$,  let
\[ U_S = \bigcap_{j,k\in S} U_{jk}\setminus \bigcup_{\ell \not\in S} b\Omega_\ell.\]
Then the family $\{U_S\}$, as $S$ runs over all possible subsets of $\{1, 2, \ldots, N\}$ including the empty set, is an open cover of $\cx^n$. 
Let $\{\chi_S\}$ be a partition of unity subordinate to this cover. Let
\[ 
\langle f,g \rangle =\int_\Omega f\overline{g} dV
\]
denote the standard inner product on $L^2(\Omega)$, where $dV$ denotes Lebesgue measure on $\cx^n$.  Let $T_j^*$ denote the 
formal adjoint of the operator $T_j$ with respect to this inner product structure. Integration by parts shows that
$T_j^*=-\overline{(T_j+ {\rm div} \,T_j)}$, and is therefore also a first order operator with smooth coefficients. 
For $f\in \mathcal{C}^\infty(\overline{\Omega})$, we define the operator $\Phi^s$ by
\[ 
\Phi^s f = \sum_{S\subset \{1,\dots, N\}} \left( \prod_{j\in S} \frac{r_j^{s_j}}{s_j!}\right) \left(\prod_{j\in S}(T_j^*)^{s_j}
\right)\left(\chi_S f\right).
\]
Then $\Phi^s$ is a linear differential operator of order $\abs{s}$.  

\medskip

Note that for each $S\subset\{1,\dots, N\}$, the smooth function $\chi_S$ vanishes to infinite order along the set 
$\bigcup_{\ell \not\in S} b\Omega_\ell$, and therefore, for any multi-index $\alpha=(\alpha_1,\dots,\alpha_{2n})$, if
$D^\alpha$ is the partial derivative operator
\[ 
D^\alpha= \prod_{j=1}^n\left( \frac{\partial}{\partial x_j}\right)^{\alpha_{2j-1}}\left( \frac{\partial}{\partial y_j}\right)^{\alpha_{2j}},
\]
and $\sigma_j$ is a nonnegative integer for  each $j\not \in S$, we have an elementary estimate
\[ 
\abs{D^\alpha \chi_S(z)}\leq C_{\sigma,\alpha} \prod_{j\not \in S} \left(d_j(z)\right)^{\sigma_j},
\]
where $C_{\sigma,\alpha}$ is a constant independent of $z$. Therefore we have 
\[
\abs{\prod_{j\in S}(T_j^*)^{s_j}
\left(\chi_S f(z)\right)}\leq C_S \norm{f}_{\mathcal{C}^{\abs{s}}}\prod_{\ell \not \in S}\left(d_\ell(z)\right)^{s_\ell} .
\]
Since $r_j$ is comparable to $d_j$ for each $j$, we have that
\[
\abs{\Phi^s f (z)} 
\leq C\norm{f}_{\mathcal{C}^{\abs{s}}} \sum_{S\subset \{1,\dots, N\}} \left(\prod_{j \in S}\left(d_j(z)\right)^{s_j}\cdot \prod_{\ell \not \in S}\left(d_\ell(z)\right)^{s_\ell} \right),
\]
which proves part (ii) of the Lemma. 

\medskip

To prove part (i), it suffices to show that for $h\in L^2(\Omega)$  and $f\in \mathcal{C}^\infty(\overline{\Omega})$ we have
\[ \langle h, P \Phi^s f\rangle = \langle h, Pf\rangle.\]
Since $P$ is the (self-adjoint) orthogonal projection from $L^2(\Omega)$ onto  $\mathcal{H}(\Omega)= \mathcal{O}(\Omega)\cap L^2(\Omega)$,
this is equivalent to
\[ \langle  g,\Phi^s f \rangle = \langle g, f\rangle,\]
where  $g\in \mathcal{H}(\Omega)$. Now we have, 
\begin{align}
\langle g, \Phi^s f\rangle&=
\sum_{S\subset\{1,\dots,N\}} \left\langle  g, \left( \prod_{j\in S} \frac{r_j^{s_j}}{s_j!}\right) \left(\prod_{j\in S}(T_j^*)^{s_j}
 \right)\left(\chi_S f\right)\right\rangle\nonumber\\
&= \sum_{S\subset\{1,\dots,N\}} \left\langle \prod_{j\in S} \frac{r_j^{s_j}}{s_j!} g, \left(\prod_{j\in S}(T_j^*)^{s_j}
 \right)\left(\chi_S f\right)\right\rangle\label{eq-sum}\\
\end{align}
For $\epsilon>0$, let $\langle,\rangle_{\Omega_\epsilon}$ denote the standard $L^2$-inner product on the domain
\[ \Omega_\epsilon= \{ z\in \cx^n\colon r_j(z)<-\epsilon, 1\le j \le N\}.\]
Fix $S\subset \{1,\dots, N\}$, and first suppose that 
$S\not=\emptyset$. Denote by $j_0$ the smallest element of $S$.  Then we have, integrating by parts:
\begin{align}
 \left\langle \prod_{j\in S} \frac{r_j^{s_j}}{s_j!} g, \left(\prod_{j\in S}(T_j^*)^{s_j}
 \right)\left(\chi_S f\right)\right\rangle_{\Omega_\epsilon}
&= \left\langle T_{j_0}\left( \prod_{j\in S} \frac{r_j^{s_j}}{s_j!} g\right), \left((T_{j_0}^*)^{s_0-1}\cdot\prod_{j\in S\setminus\{j_0\}}(T_j^*)^{s_j}
 \right)\left(\chi_S f\right)\right\rangle_{\Omega_\epsilon}\nonumber\\
&- \sum_{k=1}^N \int_{b\Omega_k^\epsilon} \left( \prod_{j\in S} \frac{r_j^{s_j}}{s_j!} g\right)\cdot 
\overline{ \left((T_{j_0}^*)^{s_0-1}\cdot\prod_{j\in S\setminus\{j_0\}}(T_j^*)^{s_j} \right)\left(\chi_S f\right)}\cdot
\frac{T_{j_0} r_k}{\abs{dr_k}} dS,  \label{eq-omegae}
\end{align}
where $b\Omega_k^\epsilon = \{r_k= -\epsilon\}$.
In the boundary term, only the summand corresponding to $k=j_0$ is non-zero, since by construction,
$T_{j_0}r_k=\delta_{kj_0}$ in $U_S$. Using the Cauchy-Schwarz inequality, and the fact that $f\in \mathcal{C}^\infty(\overline{\Omega})$,
the square of the absolute-value of the boundary term may be estimated to be less than or equal to the quantity
\begin{align*} 
& \phantom{\leq}C \int_{b\Omega^\epsilon_{j_0} } \left( \prod_{j\in S} \frac{r_j^{s_j}}{s_j!}\right)^2 \abs{g}^2 dS \\
&\leq C\rq{} \epsilon^2 \int_{b\Omega^\epsilon_{j_0}}\abs{g}^2 dS,
\end{align*}
with $C$ and $C\rq{}$  independent of $\epsilon$. Since $g\in L^2(\Omega)$, we can find a sequence $\epsilon_i\to 0$ such that
$\int_{b\Omega^{\epsilon_i}_{j_0}}\abs{g}^2 dS=o(\epsilon_i^{-1})$. Taking a limit as $\epsilon_i\to 0$ in \eqref{eq-omegae} we have
\begin{align*}
 \left\langle \prod_{j\in S} \frac{r_j^{s_j}}{s_j!} g, \left(\prod_{j\in S}(T_j^*)^{s_j}
 \right)\left(\chi_S f\right)\right\rangle&= \lim_{i\to \infty}\left\langle \prod_{j\in S} \frac{r_j^{s_j}}{s_j!} g, \left(\prod_{j\in S}(T_j^*)^{s_j}
 \right)\left(\chi_S f\right)\right\rangle_{\Omega_{\epsilon_i}}\\
&=\left\langle T_{j_0}\left( \prod_{j\in S} \frac{r_j^{s_j}}{s_j!} g\right), \left((T_{j_0}^*)^{s_0-1}\cdot\prod_{j\in S\setminus\{j_0\}}(T_j^*)^{s_j}
 \right)\left(\chi_S f\right)\right\rangle\\
&= \left\langle \frac{r_{j_0}^{s_{j_0}-1}}{(s_{j_0}-1)!}\cdot\prod_{j\in S\setminus\{j_0\}} \frac{r_j^{s_j}}{s_j!} g, \left((T_{j_0}^*)^{s_0-1}\cdot\prod_{j\in S\setminus\{j_0\}}(T_j^*)^{s_j}
 \right)\left(\chi_S f\right)\right\rangle,\\
\end{align*}
where in the last line we have used the facts that $T_{j_0} g=0$, and $T_{j_0}r_j=\delta_{jj_0}$.
Repeating the above process $s_0-1$ times more, we conclude that the above expression is equal to 
\[\left\langle \prod_{j\in S\setminus\{j_0\}} \frac{r_j^{s_j}}{s_j!} g, \left(\prod_{j\in S\setminus\{j_0\}}(T_j^*)^{s_j}
 \right)\left(\chi_S f\right)\right\rangle,
\]
and applying the same process to the smallest element of $S\setminus \{j_0\}$ and continuing till we are left with the empty set of indices, we 
conclude that
\[ \left\langle \prod_{j\in S} \frac{r_j^{s_j}}{s_j!} g, \left(\prod_{j\in S}(T_j^*)^{s_j}
 \right)\left(\chi_S f\right)\right\rangle =\langle g, \chi_S f\rangle  \]
We note that the term corresponding to $S=\emptyset$ in \eqref{eq-sum} is simply $\langle g,\chi_\emptyset f\rangle$, and therefore 
we can rewrite \eqref{eq-sum} as:
\begin{align*} 
\langle g, \Phi^s f\rangle&=  \sum_{S\subset\{1,\dots,N\}}\langle g, \chi_S f\rangle\\
&= \left\langle g,\left( \sum_{S\subset\{1,\dots,N\}}\chi_S\right) f\right\rangle\\
&= \langle g,f\rangle,
\end{align*}
since $\{\chi_S\}$ is a partition of unity. This proves the result.
\end{proof}
  Let $K_\Omega$ 
denote the Bergman kernel of a domain $\Omega$. If $P:L^2(\Omega)\to \mathcal{H}(\Omega)\subset L^2(\Omega)$ 
denotes the Bergman projection on $\Omega$,
$K_\Omega$ is characterized by the property that for each $f\in L^2(\Omega)$ we have
\[ P f(w) = \int_\Omega K_\Omega(w,z) f(z) dV(z),\]
where $dV$ is Lebesgue measure. It is well-known that $K_\Omega(w,z)$ is holomorphic in $w$, antiholomorphic in $z$ and satisfies the Hermitian symmetry $K_\Omega(w,z)=\overline{K_\Omega(z,w)}$.
 Lemma~\ref{lem-bell} leads to the following characterization of  Condition R on a  domain with generic corners.
\begin{prop}[cf. \cite{BeBo, Ba1}]\label{prop-condnr}
A domain $\Omega$ with generic corners satisfies Condition R if and only if for each multi-index $\alpha$, there  are constants
$C$ and $m$ depending only on the domain $\Omega$ such that 
\begin{equation}\label{eq-kderestimate} \abs{\frac{\partial^\alpha}{\partial w^\alpha} K_\Omega(w,z)}\leq C \dist(z,b\Omega)^{-m}
\end{equation}
for all $(w,z)\in \Omega\times \Omega$.
\end{prop}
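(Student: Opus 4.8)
The plan is to prove the two implications separately, using the Bell operator of Lemma~\ref{lem-bell} for ``\eqref{eq-kderestimate} $\Rightarrow$ Condition R'' and the closed graph theorem together with the mean value property for the converse. Assume first that \eqref{eq-kderestimate} holds and fix $f\in\mathcal{C}^\infty(\overline{\Omega})$; since $Pf$ is holomorphic it suffices to bound the holomorphic derivatives $\partial^\alpha Pf/\partial w^\alpha$ uniformly on $\Omega$ for every $\alpha$. By Lemma~\ref{lem-bell}(i) we have $Pf=P\Phi^s f$ for every tuple $s$, whence
\[ \frac{\partial^\alpha}{\partial w^\alpha}Pf(w)=\int_\Omega \frac{\partial^\alpha}{\partial w^\alpha}K_\Omega(w,z)\,\Phi^s f(z)\,dV(z), \]
the differentiation under the integral being legitimate in view of \eqref{eq-kderestimate} and the boundary decay of $\Phi^s f$. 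Taking $s=(m,\dots,m)$, with $m=m(\alpha)$ the exponent supplied by \eqref{eq-kderestimate}, and invoking Lemma~\ref{lem-bell}(ii), the integrand is dominated by $C\norm{f}_{\mathcal{C}^{\abs{s}}}\,\dist(z,b\Omega)^{-m}d(z)^s$; the elementary bound $\dist(z,b\Omega)^{-m}=(\min_j d_j(z))^{-m}\le\sum_k d_k(z)^{-m}$ then shows that $\dist(z,b\Omega)^{-m}d(z)^s$ is bounded on the bounded domain $\Omega$, uniformly in $z$, so every holomorphic derivative of $Pf$ is bounded on $\Omega$. Since a domain with generic corners is Lipschitz and hence quasiconvex, uniform bounds on the derivatives of all orders of the holomorphic function $Pf$ force each derivative to be uniformly continuous on $\Omega$ and thus to extend continuously to $\overline{\Omega}$; therefore $Pf\in\mathcal{C}^\infty(\overline{\Omega})$, i.e., Condition R holds.

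For the converse, assume Condition R. Because $P$ is bounded on $L^2(\Omega)$ and convergence in $\mathcal{C}^\infty(\overline{\Omega})$ forces $L^2$ convergence, the graph of $P\colon\mathcal{C}^\infty(\overline{\Omega})\to\mathcal{C}^\infty(\overline{\Omega})$ is closed, so the closed graph theorem makes $P$ continuous between these Fr\'echet spaces: for each $j$ there are constants $C,k$ with $\sup_{w\in\Omega}\abs{\partial^\beta Pf(w)}\le C\norm{f}_{\mathcal{C}^{k}(\overline{\Omega})}$ whenever $\abs{\beta}\le j$. The crucial device is to realize the kernel as the projection of a rescaled bump. Fixing $z\in\Omega$ and writing $\rho=\dist(z,b\Omega)$, let $\chi_z(\zeta)=(\rho/2)^{-2n}\psi\!\left(\tfrac{2}{\rho}(\zeta-z)\right)$, where $\psi\ge 0$ is a fixed radial function supported in the unit ball with $\int\psi\,dV=1$; then $\chi_z\in\mathcal{C}_c^\infty(\Omega)$. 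Since $\zeta\mapsto K_\Omega(w,\zeta)$ is antiholomorphic, hence harmonic, the mean value property gives $P\chi_z(w)=\int_\Omega K_\Omega(w,\zeta)\chi_z(\zeta)\,dV(\zeta)=K_\Omega(w,z)$, so that $K_\Omega(\cdot,z)=P\chi_z$. Applying the continuity estimate together with the scaling bound $\norm{\chi_z}_{\mathcal{C}^{k}}\le C\rho^{-2n-k}$ then yields
\[ \sup_{w\in\Omega}\abs{\frac{\partial^\alpha}{\partial w^\alpha}K_\Omega(w,z)}\le C\,\dist(z,b\Omega)^{-(2n+k)}, \]
which is precisely \eqref{eq-kderestimate} with $m=2n+k$.

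I expect the main obstacle to lie in the first implication, namely the passage from uniform boundedness of the derivatives of $Pf$ on $\Omega$ to genuine membership in $\mathcal{C}^\infty(\overline{\Omega})$, which relies on the quasiconvexity of domains with generic corners; on the technical side the delicate point is the choice of the tuple $s$ and the verification that $\dist(z,b\Omega)^{-m}d(z)^s$ remains bounded near the corners, where several of the factors $d_j$ degenerate simultaneously. The converse, by contrast, is essentially formal once the representation $K_\Omega(\cdot,z)=P\chi_z$ is in place.
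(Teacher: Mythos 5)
Your proof is correct and is exactly the Bell--Boas argument that the paper's one-line proof invokes by reference: the Bell operator $\Phi^s$ of Lemma~\ref{lem-bell} to pass from the kernel estimate to Condition R, and the closed graph theorem combined with the radial-bump representation $K_\Omega(\cdot,z)=P\chi_z$ for the converse. (One harmless imprecision: $\dist(z,b\Omega)=\min_j d_j(z)$ need not be an equality, but the inequality $\dist(z,b\Omega)\ge\min_j d_j(z)$, which is all your estimate uses, holds trivially since $b\Omega\subset\bigcup_j b\Omega_j$.)
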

\begin{proof} The method of proof given in \cite{BeBo} may be applied with appropriate minor modification. The crucial point here
is the  existence of the operator $\Phi^s$.
\end{proof}

\section{Some examples}
\label{sec-examples}
We now consider examples of domains $D$ in $\cx^n$
for which the hypotheses  of Theorem~\ref{thm-biholo} hold, i.e., $D$ has generic corners, is pseudoconvex and
satisfies Condition R. 
Note that 
if $D$ satisfies the hypotheses of Theorem~\ref{thm-biholo}, it follows from  Theorem~\ref{thm-biholo} that so does $F(D)$,
where $F:D\to F(D)$ is a biholomorphic map extending smoothly to $\overline{D}$. If $n=1$, the only domains with generic corners 
are the smoothly bounded ones.  For $ n\geq 2$, there do exist domains with generic corners in $\cx^n$. However,  many interesting 
piecewise smooth domains do not have generic corners, e.g., the intersection of two balls in $\cx^2$ (see \cite{barrettvass}.)

For smoothly bounded domains, Condition R is a consequence of global regularity estimates on the $\dbar$-Neumann operator 
(see  \cite{CS, straube} for details.)  Indeed it suffices to know that the $\dbar$-Neumann operator is compact on the space $L^2_{0,1}(D)$
of square integrable  $(0,1)$-forms.  However, as \cite{ehsani} already shows, this strategy is unlikely to succeed with general piecewise smooth domains.  The question of establishing Condition R 
on such domains therefore merits deeper study.  However there are a few cases where Condition R can be established on a domain with generic corners by elementary means.  
\subsection{Products}We first show that the hypotheses of  Theorem~\ref{thm-biholo} propagate to products:
\begin{prop}For $j=1,\dots, k$, let  $D_j\Subset \cx^{n_j}$ be a domain with generic corners which 
satisfies Condition R. Let $n= \sum_{j=1}^k n_j$, and let $D$ be the domain in $\cx^n$ given as $D=D_1\times D_2\times \dots \times D_k$.
Then $D$ has generic corners and satisfies Condition R.
\end{prop}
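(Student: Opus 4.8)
The plan is to verify the two defining properties separately, handling the generic-corner structure first (so that it becomes available as a hypothesis) and then Condition R through the kernel characterization of Proposition~\ref{prop-condnr}.

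\textbf{Generic corners.} Write each factor as $D_j=\bigcap_{i=1}^{N_j}\{r_{j,i}<0\}$, where the $r_{j,i}\colon\cx^{n_j}\to\rl$ are defining functions witnessing that $D_j$ has generic corners. For $z=(z_1,\dots,z_k)\in\cx^n$ with $z_j\in\cx^{n_j}$, set $\widehat{r}_{j,i}(z)=r_{j,i}(z_j)$, so that $D=\bigcap_{j,i}\{\widehat{r}_{j,i}<0\}$ exhibits $D$ as an intersection of smooth domains. I would then check the two wedge conditions that, as noted after Definition~\ref{defn-main}, are equivalent to properties (i) and (ii). Fix a point $p$ lying on a collection of the boundaries $\{\widehat{r}_{j,i}=0\}$ and group the active indices $(j,i)$ by the factor $j$. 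Because $\widehat{r}_{j,i}$ depends only on the block $z_j$, the differentials $d\widehat{r}_{j,i}$ (respectively $\dbar\widehat{r}_{j,i}$) coming from distinct factors involve disjoint coordinate blocks, while within a single factor their wedge is nonzero precisely because $D_j$ has generic corners. Hence both the full wedge $\bigwedge d\widehat{r}_{j,i}(p)$ and the full wedge $\bigwedge\dbar\widehat{r}_{j,i}(p)$ are nonzero, which is the real analogue of \eqref{eq-c-trv} together with \eqref{eq-c-trv} itself for $D$. Thus $D$ is a domain with generic corners.

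\textbf{Condition R.} Having established that $D$ has generic corners, I would invoke Proposition~\ref{prop-condnr}: it suffices to prove that for every multi-index $\alpha$ there are constants $C,m$ with $\abs{\partial_w^\alpha K_D(w,z)}\le C\,\dist(z,bD)^{-m}$ on $D\times D$. Here I would use the classical product formula for Bergman kernels,
\[ K_D(w,z)=\prod_{j=1}^k K_{D_j}(w_j,z_j), \]
which reflects the fact that $\mathcal{H}(D)$ is the Hilbert-space tensor product of the $\mathcal{H}(D_j)$. Writing $\alpha=(\alpha_1,\dots,\alpha_k)$ according to the factors, differentiation factors as $\partial_w^\alpha K_D(w,z)=\prod_j\partial_{w_j}^{\alpha_j}K_{D_j}(w_j,z_j)$, and since each $D_j$ satisfies Condition R, Proposition~\ref{prop-condnr} applied to $D_j$ gives $\abs{\partial_{w_j}^{\alpha_j}K_{D_j}(w_j,z_j)}\le C_j\,\dist(z_j,bD_j)^{-m_j}$.

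The remaining ingredient is the elementary distance identity $\dist(z,bD)=\min_{1\le j\le k}\dist(z_j,bD_j)$, valid because $\overline{D}=\overline{D_1}\times\dots\times\overline{D_k}$ and every boundary point of the product has at least one coordinate on the boundary of its own factor. Consequently $\dist(z_j,bD_j)\ge\dist(z,bD)$ for each $j$, and multiplying the per-factor estimates yields $\abs{\partial_w^\alpha K_D(w,z)}\le\bigl(\prod_j C_j\bigr)\dist(z,bD)^{-\sum_j m_j}$, which is exactly the bound required by Proposition~\ref{prop-condnr}; hence $D$ satisfies Condition R. I expect no serious obstacle: the genericity check is bookkeeping with disjoint variable blocks, and the only inputs for Condition R are the kernel characterization, the standard product formula, and the distance identity. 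The one point requiring care is the logical ordering, namely establishing the generic-corner property of $D$ before applying Proposition~\ref{prop-condnr}, since that proposition is stated only for domains with generic corners.
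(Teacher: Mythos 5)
Your proposal is correct and follows essentially the same route as the paper: exhibit the product as an intersection of lifted smooth domains and verify the wedge conditions blockwise (the paper reduces to $k=2$ by induction and calls this step ``easy to verify''), then deduce Condition R from Proposition~\ref{prop-condnr} via the product formula $K_D=\prod_j K_{D_j}$ and the per-factor kernel estimates. Your explicit use of the distance identity $\dist(z,bD)=\min_j\dist(z_j,bD_j)$ just spells out the step the paper leaves implicit in ``it follows that the derivatives of $K$ satisfy the estimate.''
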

In \cite{prod, spec}, the following was proved: if $D_1\subset\cx^{n_1},D_2\subset\cx^{n_2}$ are bounded pseudoconvex domains
(no assumption of generic corners on the boundary)
such that each of them satisfies Condition R, then so does their product. Here on the other hand there is no assumption of pseudoconvexity.

Note also that combining this proposition  and Theorem~\ref{thm-biholo} we recapture the famous observation of 
Poincar\'{e}: the ball and bidisc in $\cx^2$ are not biholomorphically equivalent.

\begin{proof}By an induction argument, it is sufficient to prove this for $k=2$. Assume that as in Definition~\ref{defn-main}
we are given representations $D_1=\bigcap_{j=1}^NG_j$ and $D_2=\bigcap_{\ell=1}^M H_k$. We then have 
\begin{align*}D_1\times D_2&= (D_2\times \cx^{n_2})\cap (\cx^{n_1}\times D_2)\\
&= \left(\bigcap_{j=1}^N(G_j\times\cx^{n_2})\right)\cap \left(\bigcap_{\ell=1}^M (\cx^{n_1}\times H_\ell)\right),
\end{align*}
which is a representation of $D_1\times D_2$ as an intersection of smoothly bounded domains. Since $D_1,D_2$ have generic corners,
it is easy to verify that the corners of the product are CR manifolds of the right CR dimension.

Denote by $K_j$ the Bergman kernel of $D_j$. The derivatives of $K_j$ satisfy the estimate \eqref{eq-kderestimate}, since $D_j$ satisfies Condition R.
Thanks to \cite[Theorem~6.1.11]{jp} the Bergman Kernel  $K$  of the product $D_1\times D_2$ can be represented $K_1\tensor K_2$, i.e., for 
$z=(z_1,z_2)\in D_1\times D_2$ and $w=(w_1,w_2)\in D_1\times D_2$, we have
\[ K(w,z)= K_1(w_1,z_1)\cdot K_2(w_2,z_2).\]
Then it follows that the derivatives of $K$ satisfy the estimate \eqref{eq-kderestimate}, and it follows that $D$ satisfies Condition R.
\end{proof}

\subsection{Domains with circular symmetry.}
Recall that a domain $D\subset\cx^n$ is said to be {\em circular} if it is invariant under the natural action of the circle group,
i.e.,  if for each $z\in D$ and each real number $\theta$, 
we have that $e^{i\theta}z\in D$. Clearly, the boundary $bD$ of $D$ has the same circular symmetry. Further, if $D$ has piecewise smooth boundary, 
it is clear that every stratum is invariant under the circle group. Further, we call a domain  {\em complete circular} if  for each $z\in D$,  and for each complex number $\lambda$ in  the
closed unit disc  (i.e. if $\abs{\lambda}\leq 1$), we have $\lambda z\in D$. 

For a piecewise smooth domain $D$, by a {\em face} we mean a connected component of $b\reg{D}$. If $D$ is represented as the intersection
$\cap_{j=1}^N D_j$, where each $D_j$ is smoothly bounded and all intersections of the boundaries are transverse, then it is clear that each face
is a connected component of $bD_j\cap\overline{D}$ for some $j$. The following result,  extending a classical 
argument of Boas and Bell,  gives simple examples of domains with generic corners satisfying 
Condition R:
\begin{prop}[cf. {\cite[Theorem 2$^\prime$]{BeBo}}]\label{prop-examples} Let $D\subset\cx^n$ be a bounded complete circular domain with generic corners
such that for each $\zeta\in bD$,
the radial line from the origin to $\zeta$ meets each face of $bD$ that passes through $\zeta$ transversely. Then $D$ satisfies Condition R.
\end{prop}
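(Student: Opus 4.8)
The plan is to verify the kernel estimate \eqref{eq-kderestimate} of Proposition~\ref{prop-condnr} directly, exploiting the circular symmetry to split the Bergman kernel into homogeneous pieces. Since $D$ is circular, for each real $\theta$ the unitary rotation $R_\theta(z)=e^{i\theta}z$ preserves $D$ and Lebesgue measure, so $f\mapsto f\circ R_\theta$ gives a unitary representation of the circle on $\mathcal{H}(D)$, and this decomposes $\mathcal{H}(D)=\bigoplus_{k\ge 0}\mathcal{H}_k$ orthogonally, where $\mathcal{H}_k$ consists of those $f\in\mathcal{H}(D)$ with $f(e^{i\theta}z)=e^{ik\theta}f(z)$. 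Because $D$ is complete circular (balanced), each such $f$ is in fact a homogeneous polynomial of degree $k$, so $\dim\mathcal{H}_k=\binom{n+k-1}{n-1}=O(k^{n-1})$. Writing $K_k$ for the (polynomial) reproducing kernel of the finite-dimensional space $\mathcal{H}_k$, orthogonality gives $K_D(w,z)=\sum_{k\ge 0}K_k(w,z)$, with each $K_k$ bihomogeneous of bidegree $(k,k)$. I would also introduce the Minkowski gauge $\gamma(z)=\inf\{t>0:z/t\in D\}$, so that $D=\{\gamma<1\}$, $\gamma(\lambda z)=\abs{\lambda}\gamma(z)$, and, since $D$ is bounded and $z/\gamma(z)\in bD$, one has the elementary bound $\dist(z,bD)\le C(1-\gamma(z))$ for $z$ near $bD$.

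The heart of the matter is a polynomial-in-$k$ bound for $M_k:=\sup_{\zeta\in bD}K_k(\zeta,\zeta)$, and this is the only place the transversality hypothesis enters. I claim it produces a constant $c>0$ such that for every $\zeta\in bD$ and small $\epsilon>0$ the ball $B((1-\epsilon)\zeta,c\epsilon)$ lies in $D$. Indeed, if $r_j$ is a normalized defining function of a face through $\zeta$, transversality of the radial line to that face means the radial derivative $\frac{d}{dt}r_j(t\zeta)\big|_{t=1}$ is bounded below by a positive constant uniformly in $\zeta$, whence $r_j((1-\epsilon)\zeta)\le -c'\epsilon$; taking the minimum over the finitely many faces meeting at $\zeta$ gives $\dist((1-\epsilon)\zeta,bD)\ge c\epsilon$. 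Fixing an orthonormal basis $\{e_i^{(k)}\}$ of $\mathcal{H}_k$ and applying the sub-mean-value inequality for $\abs{e_i^{(k)}}^2$ on this ball, together with $e_i^{(k)}((1-\epsilon)\zeta)=(1-\epsilon)^k e_i^{(k)}(\zeta)$ and the choice $\epsilon=1/k$, yields $\abs{e_i^{(k)}(\zeta)}^2\le Ck^{2n}\norm{e_i^{(k)}}^2=Ck^{2n}$; summing over $i$ gives $M_k\le C k^{3n-1}$. The same argument applied to the (still homogeneous) polynomials $\partial^\alpha e_i^{(k)}$, now using Cauchy estimates on $B((1-\epsilon)\zeta,c\epsilon/2)$ to absorb a factor $k^{2\abs{\alpha}}$, gives an analogous polynomial bound $M_k^{(\alpha)}:=\sup_{bD}\sum_i\abs{\partial^\alpha e_i^{(k)}}^2\le C_\alpha k^{A_\alpha}$.

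With these in hand the estimate is immediate. For $w,z\in D$, Cauchy--Schwarz and bihomogeneity give $\abs{\partial_w^\alpha K_k(w,z)}\le\big(\sum_i\abs{\partial^\alpha e_i^{(k)}(w)}^2\big)^{1/2}\big(\sum_i\abs{e_i^{(k)}(z)}^2\big)^{1/2}$, where the first factor equals $\gamma(w)^{k-\abs{\alpha}}$ times a boundary value and is thus $\le (M_k^{(\alpha)})^{1/2}$ because $\gamma(w)<1$, while the second is $\le\gamma(z)^k M_k^{1/2}$. Summing the resulting polynomial-times-geometric series in $k$ gives
\[ \abs{\partial_w^\alpha K_D(w,z)}\le C_\alpha\sum_{k\ge 0}k^{B_\alpha}\,\gamma(z)^k\le C_\alpha'\,(1-\gamma(z))^{-m_\alpha}, \]
and since $\dist(z,bD)\le C(1-\gamma(z))$ this is $\le C\dist(z,bD)^{-m_\alpha}$, uniformly in $w\in D$; this convergence also justifies the termwise differentiation. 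By Proposition~\ref{prop-condnr}, $D$ satisfies Condition R.

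The main obstacle is the polynomial growth of the $M_k$: this is exactly where radial transversality is indispensable, for without it the interior balls at $(1-\epsilon)\zeta$ could have radius $o(\epsilon)$, allowing $M_k$ to grow faster than any polynomial and destroying the summation. A secondary technical point is justifying that members of $\mathcal{H}_k$ are genuine homogeneous polynomials and that the expansion $K_D=\sum_k K_k$ is valid; both follow from $D$ being balanced, but one must take care at the corners, where the interior ball must be produced from the faces simultaneously — which is precisely what the hypothesis that the radial line meets \emph{each} face through $\zeta$ transversely guarantees.
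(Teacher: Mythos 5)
Your proof is correct, but it takes a genuinely different route from the paper's. Both arguments reduce the statement to the kernel estimate \eqref{eq-kderestimate} via Proposition~\ref{prop-condnr}, and both extract exactly the same geometric content from the radial transversality hypothesis: your interior-ball claim $B((1-\epsilon)\zeta,c\epsilon)\subset D$ is precisely the paper's comparability $\rho(w)\le Cd(w)$ of the radial and Euclidean boundary distances, proved there by the same normal-derivative-bounded-below-plus-compactness argument, with the same brevity about uniformity at the corners. Where you diverge is in how the kernel is then estimated. The paper never decomposes $K$ by degree: it uses the homogeneous expansion only to establish the rotation identity $K(\lambda w,z)=K(w,\overline{\lambda}z)$ of Lemma~\ref{prop-circular}, then for fixed $z$ retracts $w$ radially to $t=(1-2\delta/\abs{w})w$ while pushing the compensating dilation onto the $z$-slot, and controls $\partial_w^\alpha K$ by Cauchy estimates on a polydisc of radius comparable to $\delta$ about $t$, combined with the kernel Cauchy--Schwarz inequality \eqref{eq-cauchy-schwarz} and the universal diagonal bound \eqref{eq-compare}. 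You instead sum the bihomogeneous pieces $K_k$ directly, using the mean-value and Cauchy estimates at $(1-1/k)\zeta$ to obtain the polynomial bounds $M_k\lesssim k^{3n-1}$ and $M_k^{(\alpha)}\lesssim k^{A_\alpha}$, and then sum the series $\sum_k k^{B_\alpha}\gamma(z)^k\lesssim (1-\gamma(z))^{-B_\alpha-1}$. Your approach yields explicit, traceable exponents $m_\alpha$ and isolates cleanly where transversality is indispensable (polynomial growth of the degreewise sup norms), at the cost of a dimension count and a degree-by-degree bookkeeping; the paper's dilation argument avoids any decomposition of the Bergman space beyond the symmetry lemma and stays closer to the original Bell--Boas method for domains with transverse symmetries. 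The minor points you flag (that elements of $\mathcal{H}_k$ are genuine homogeneous polynomials on a balanced domain, the validity of $K=\sum_k K_k$, and the degenerate cases $w=0$, $z=0$) are all routine and do not affect the argument.
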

We begin by noting a symmetry property of the Bergman kernel of a circular domain:
\begin{lem}\label{prop-circular}
Let $K$ denote the Bergman kernel of a circular domain $D$, where $0\in D$.
Then if $\lambda$ is a complex number and $z,w\in D$ are such that the points
$\lambda w, \overline{\lambda }z$ are in $D$, then we have
\[ K(\lambda w, z)= K(w, \overline{\lambda}z).\]
\end{lem}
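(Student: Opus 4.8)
The plan is to use the circular symmetry of $D$ to split $\mathcal{H}(D)=\mathcal{O}(D)\cap L^2(D)$ into homogeneous pieces, and then to read the claimed identity off the series representation of $K$. Recall that for any orthonormal basis $\{\psi_j\}$ of $\mathcal{H}(D)$ one has $K(a,b)=\sum_j \psi_j(a)\overline{\psi_j(b)}$, and that by Cauchy--Schwarz $\sum_j \abs{\psi_j(a)}\,\abs{\psi_j(b)}\leq K(a,a)^{1/2}K(b,b)^{1/2}<\infty$ for $a,b\in D$; thus the kernel series converges absolutely and may be rearranged freely. This absolute convergence is what will license the term-by-term manipulation at the end.

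First I would construct an orthonormal basis adapted to the circle action $z\mapsto e^{i\theta}z$. Since this action preserves both $D$ and Lebesgue measure, a change of variables shows that homogeneous polynomials of distinct degrees are orthogonal in $L^2(D)$: if $P,Q$ are homogeneous of degrees $d\neq d'$, then $\langle P,Q\rangle=e^{i(d-d')\theta}\langle P,Q\rangle$ for all $\theta$, forcing $\langle P,Q\rangle=0$. Next, for $f\in\mathcal{H}(D)$ the averaging operator $\Pi_d f(z)=\frac{1}{2\pi}\int_0^{2\pi} e^{-id\theta}f(e^{i\theta}z)\,d\theta$ is well defined (by circularity $e^{i\theta}z\in D$), is holomorphic, and satisfies $\Pi_d f(e^{i\phi}z)=e^{id\phi}\Pi_d f(z)$. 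Comparing the latter with the Taylor expansion of $\Pi_d f$ at $0\in D$ forces every homogeneous Taylor component of degree $\neq d$ to vanish, so $\Pi_d f$ agrees near $0$ with a homogeneous polynomial of degree $d$, hence equals it on the connected set $D$. Therefore $\mathcal{H}(D)$ is the orthogonal direct sum of the finite-dimensional spaces $\mathcal{H}_d$ of homogeneous polynomials of degree $d$, and I may pick an orthonormal basis $\{\phi_{d,k}\}$ with each $\phi_{d,k}\in\mathcal{H}_d$.

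With this basis the identity is a one-line computation. Because each $\phi_{d,k}$ is an honest homogeneous polynomial of degree $d$, we have the full complex homogeneity $\phi_{d,k}(\lambda\zeta)=\lambda^d\phi_{d,k}(\zeta)$ for every $\lambda\in\cx$. The hypotheses $\lambda w\in D$ and $\overline{\lambda}z\in D$ guarantee convergence of the relevant kernel series, and rearranging by absolute convergence gives
\[
K(\lambda w,z)=\sum_{d,k}\lambda^d\phi_{d,k}(w)\overline{\phi_{d,k}(z)}
=\sum_{d,k}\phi_{d,k}(w)\overline{(\overline{\lambda})^d\phi_{d,k}(z)}
=K(w,\overline{\lambda}z),
\]
where the middle equality uses $\lambda^d=\overline{(\overline{\lambda})^d}$ and the last recognizes $(\overline{\lambda})^d\phi_{d,k}(z)=\phi_{d,k}(\overline{\lambda}z)$. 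This is precisely the assertion.

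The only genuine obstacle is the grading step, namely verifying that the circle-homogeneous pieces $\mathcal{H}_d$ are mutually orthogonal and consist of genuine homogeneous polynomials; this is what upgrades the circle homogeneity $\phi_{d,k}(e^{i\theta}\zeta)=e^{id\theta}\phi_{d,k}(\zeta)$ to the full homogeneity $\phi_{d,k}(\lambda\zeta)=\lambda^d\phi_{d,k}(\zeta)$ needed above, and it is here that the hypothesis $0\in D$ enters, through the Taylor expansion at the origin. One could instead observe that rotation invariance of $K$ yields the identity for $\abs{\lambda}=1$ and then continue analytically in $\lambda$, but that route would require controlling the connectedness of $\{\lambda\in\cx:\lambda w\in D,\ \overline{\lambda}z\in D\}$, a complication the series argument avoids entirely.
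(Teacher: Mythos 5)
Your argument is correct and follows essentially the same route as the paper: both proofs build an orthonormal basis of $\mathcal{H}(D)$ consisting of homogeneous polynomials (using the same rotation-invariance computation to get orthogonality of distinct degrees) and then read the identity off the series $K(a,b)=\sum_j \eta_j(a)\overline{\eta_j(b)}$ via $\eta_j(\lambda\zeta)=\lambda^{d_j}\eta_j(\zeta)$. The only difference is a sub-step: you obtain completeness of the graded decomposition from the circle-averaging projections $\Pi_d$, whereas the paper invokes the homogeneous expansion of holomorphic functions on circular domains (citing Malgrange) and Gram--Schmidt; both are fine.
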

\begin{proof}We claim that  there is an orthonormal basis $\{\eta_j\}_{j=1}^\infty$ 
of the Bergman space $\mathcal{H}(D)$ whose elements are 
homogeneous polynomials. Indeed, it is well-known that any holomorphic function on the circular domain $D$ can be expanded in a series of the form
$f(z) = \sum_{k=1}^\infty P_k(z)$,
where each $P_k$ is a homogenous polynomial  and the series converges uniformly on 
compact subsets of $D$ (see e.g.\cite{malgrange}.) Choosing a basis of the space of homogeneous polynomials 
of degree $d$, and taking the union as $d$ ranges over the non-negative integers, we obtain a family of homogeneous polynomials 
whose span is dense in $L^2(\Omega)$.  Further, if $P$ and $Q$ are homogeneous polynomials of degrees
$p$ and $q$ respectively, they are orthogonal in $L^2(D)$ if $p\not=q$. Indeed, if $\theta$ is a real number such that $e^{ i (p-q)\theta}\not=1$, we have
using the change of variables formula and the fact that the unitary transformation $z\mapsto e^{i\theta}z$ has  real Jacobian determinant
 identically equal to 1, 
\begin{align*}\int_D P(z)\overline{Q(z)}dV(z)&= \int_D P(e^{i\theta}w) \overline{Q(e^{i\theta}w)}dV(w)\\
&= e^{i(p-q)\theta}\int_D P(z)\overline{Q(z)}dV(z).
\end{align*}
Consequently, if the Gram-Schmidt process is applied to the spanning family of homogeneous polynomials, it yields the orthonormal sequence $\{\eta_j\}$,
and the Bergman kernel is then represented as $K(w,z)= \sum_{j=1}^\infty \eta_j(w)\overline{\eta_j(z)}$.  Recalling that each $\eta_j$ is homogenous of 
some degree, the result follows.
\end{proof}
We also note the two useful properties  of the Bergman kernel:  the Cauchy-Schwarz inequality
\begin{equation}\label{eq-cauchy-schwarz}
\abs{K(w,z)}\leq \left( K(w,w)K(z,z)\right)^{\frac{1}{2}},
\end{equation}
and the fact that on any bounded domain $D$
\begin{equation}\label{eq-compare}K(z,z)\leq \text{(const)}d(z)^{-n-1},
\end{equation}
 obtained by comparing the Bergman kernel of $D$ with  that of a ball centered at $z$ and radius $d(z)$.
 
\noindent From now on, let $D$ be   complete circular.
Then for a point $w\in D$, we can define the {\em radial boundary distance} $\rho(w)$ in the following way.  Let $w^*$ be the unique point on 
the boundary $bD$ which is collinear with 0 and $w$. We define $\rho (w)=\abs{w^*-w}$. We also denote $d(w)=\dist(w,bD)$, and call this the 
{\em standard boundary distance.}  We will be interested 
in domains in which  there is a constant $C>1$ such that 
\begin{equation}\label{eq-standardradial} \rho(w)\leq C d(w).\end{equation}
Since we always have $d(w)\leq \rho(w)$ we will say that on such domains the radial and standard boundary distances are comparable. 
We first note that this property holds on the domains considered in  Proposition~\ref{prop-examples}:
\begin{lem}Let $D$ be a piecewise smooth complete circular  domain such that for each $\zeta\in bD$, the radial line from $0$ to $\zeta$ meets each 
face of $bD$ which passes through $\zeta$ transversally. Then the standard and the radial distance are comparable on $D$.
\end{lem}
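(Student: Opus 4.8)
The plan is to prove the equivalent lower bound $d(w)\geq c\,\rho(w)$ for a constant $c>0$, since the reverse inequality $d(w)\leq\rho(w)$ is already noted. Write $D=\bigcap_{j=1}^N D_j$ with $r_j$ a defining function for $D_j$ and $d_j(w)=\dist(w,bD_j)$, so that $\abs{r_j}$ is comparable to $d_j$ near $bD_j$. Since $D$ is complete circular its Minkowski functional $\mu$ is continuous on $\cx^n$ with $bD=\{\mu=1\}$, and a short computation gives $\rho(w)=\abs{w}(\mu(w)^{-1}-1)$; in particular $\rho$ extends continuously to $\overline D$ and vanishes on $bD$. Hence for any fixed $\delta>0$ the set $\{\rho\geq\delta\}$ is a compact subset of the open set $D$, on which $d$ is bounded below and $\rho$ above, so the inequality is trivial there. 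It therefore suffices to treat $w$ with $\rho(w)$ small, and for such $w$ I will write $w^\ast=w/\mu(w)\in bD$ for the radial boundary point, so that $w=\mu(w)w^\ast$ and $\rho(w)=(1-\mu(w))\abs{w^\ast}$.

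Two ingredients feed into the estimate. First, an elementary transversality-free bound: if $p\in bD$ realizes $d(w)=\abs{w-p}$ and $p\in bD_k$, then $d_k(w)\leq\abs{w-p}=d(w)$, and the Lipschitz bound $\abs{r_k(w)}\leq\norm{\nabla r_k}_\infty d_k(w)$ yields $\min_j\abs{r_j(w)}\leq C_2\,d(w)$. Thus it is enough to prove the opposite comparison $\min_j\abs{r_j(w)}\geq c_1\rho(w)$. Second, completeness pins down the sign of the radial derivative of each defining function at the boundary: for $\zeta\in bD$ with $\zeta\in bD_\ell$, the curve $t\mapsto r_\ell((1-t)\zeta)$ is negative for small $t>0$ and vanishes at $t=0$, whence $\langle\nabla r_\ell(\zeta),\zeta\rangle\geq 0$; combined with the transversality hypothesis $\langle\nabla r_\ell(\zeta),\zeta\rangle\neq 0$ this forces $\langle\nabla r_\ell(\zeta),\zeta\rangle>0$. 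By continuity of $\zeta\mapsto\langle\nabla r_\ell(\zeta),\zeta\rangle$ and compactness of the seam $bD_\ell\cap\overline D\subset bD$, there is a uniform $c_0>0$ with $\langle\nabla r_\ell(\zeta),\zeta\rangle\geq c_0$ on that seam, and by continuity this persists on a full neighborhood of the seam in $\overline D$.

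It remains to prove $\min_j\abs{r_j(w)}\geq c_1\rho(w)$ by Taylor expanding each $r_\ell$ along the radial segment from $w$ to $w^\ast$, using $r_\ell(\mu(w)w^\ast)=r_\ell(w^\ast)-\tfrac{\rho(w)}{\abs{w^\ast}}\langle\nabla r_\ell(w^\ast),w^\ast\rangle+O(\rho(w)^2)$, where the second-order term is controlled uniformly since the $r_\ell$ are smooth on the bounded set $\overline D$ and $\abs{w^\ast}$ is bounded above and below. For an exit face with $w^\ast\in bD_\ell$ one has $r_\ell(w^\ast)=0$ and $\langle\nabla r_\ell(w^\ast),w^\ast\rangle\geq c_0$, so $\abs{r_\ell(w)}\geq c_1\rho(w)$ directly. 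For a face with $w^\ast$ bounded away from $bD_\ell$, $\abs{r_\ell(w)}$ is bounded below by a positive constant, hence exceeds $c_1\rho(w)$ once $\rho(w)$ is small. The delicate remaining case, and the main obstacle, is a face $\ell$ with $w^\ast\notin bD_\ell$ but $w^\ast$ close to a corner $bD_\ell\cap bD$: here I invoke the uniform neighborhood of the previous paragraph to guarantee $\langle\nabla r_\ell(w^\ast),w^\ast\rangle\geq c_0/2>0$, so that $r_\ell(w)\leq r_\ell(w^\ast)-\tfrac{c_0}{2\abs{w^\ast}}\rho(w)+O(\rho(w)^2)\leq-c_1\rho(w)$, the positivity of the radial derivative being exactly what prevents the cancellation that would otherwise destroy the bound. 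Taking the minimum over the finitely many $\ell$ and combining with the two ingredients above gives $d(w)\geq\tfrac{1}{C_2}\min_j\abs{r_j(w)}\geq\tfrac{c_1}{C_2}\rho(w)$, which is the required comparison.
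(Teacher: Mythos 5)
Your proof is correct, but it takes a genuinely different route from the paper's. The paper argues geometrically and reduces at once to the smooth case: for each face it uses transversality to conclude that the angle between the radial segment $zz^*$ and the normal segment $z\hat z$ is bounded away from $\pi/2$, giving $\rho_j(z)\le C d_j(z)$ for the radial/normal distances to the single hypersurface $bD_j$, and then handles the corners by the one-line observation that $D\subset D_j$ forces $\rho(z)\le\rho_j(z)$, whence $\rho(z)\le C\min_j d_j(z)\le C d(z)$. You instead work entirely with defining functions: you extract from completeness the sign $\langle\nabla r_\ell(\zeta),\zeta\rangle\ge 0$ and from transversality its nonvanishing, get a uniform positive lower bound on the seam $bD_\ell\cap\overline D$ and a neighborhood of it, and then Taylor-expand each $r_\ell$ along the radial segment, with a three-case analysis (exit face, far face, face adjacent across a corner) that makes explicit why no cancellation can occur near corners. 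Your version is longer but more quantitative and self-contained; it isolates exactly where completeness is used (to pin the sign of the radial derivative, a point the paper leaves implicit), and it produces the intermediate comparison $d(w)\gtrsim\min_j\abs{r_j(w)}\gtrsim\rho(w)$, which meshes directly with the weights $d_j$ appearing in Lemma~2.1. The paper's version buys brevity through the slick inequality $\rho\le\rho_j$. One soft spot in your write-up: the blanket assertion that the Minkowski functional of a complete circular domain is continuous (used to make $\{\rho\ge\delta\}$ compact in $D$) is not true for arbitrary balanced domains --- $\mu$ is in general only upper semicontinuous; here it does hold, but only because the transversality hypothesis (via your own computation $\langle\nabla r_\ell(\zeta),\zeta\rangle>0$) prevents the ray through a boundary point from re-entering $\overline D$, so you should either cite that or simply dispose of the region away from $b D$ by noting that $\rho$ is bounded by $\mathrm{diam}(D)$ while $d$ is bounded below on any compact subset of $D$.
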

\begin{proof}Let $bD$ be smooth, and fix a tubular neighborhood $U$ of $bD$ in $D$. For a point $z$ in $U$ denote by $\hat{z}$ the 
unique point on $bD$ closest to $z$, and by $z^*$ the point where the radial line from 0 to $z$ meets $bD$. From the transversality of the line $zz^*$
to $bD$, it follows that the angle between $zz^*$ and $z\hat{z}$ is bounded away from $\frac{\pi}{2}$, and the result follows in this case.

Assuming now  that there are at least two faces,
it is sufficient to prove \eqref{eq-standardradial} for $z$ in some neighborhood
$U$ of $bD$ in $D$. Let $U$ be the union of $U_j$, where each $U_j$ is a tubular neighborhood of $bD_j$, where the domain $D$ is
represented as an intersection $\bigcap_{j=1}^N D_j$. Let $\rho_j(z)$ represent the radial distance from $z$ to the boundary $bD_j$,
and $d_j(z)= \dist(z, bD_j)$.
Then if $z\in U_j$, we have $\rho_j(z)\leq C d_j(z)$, where $C$ may be taken independent of $j$. But $\rho(z)\leq \rho_j(z) \leq C d_j(z)$ for each $z$ in 
$U_j$. If a point $z$ in $U$ belongs to more than one $U_j$, it follows that we must have $\rho(z)\leq C \min d_j(z)=C d(z)$ where the minimum is taken over
all $j$ such that the point $z$ belongs to $U_j$. The result is proved.

\end{proof}
The proof is now completed by the following lemma, and an appeal to Proposition~\ref{prop-condnr}:

\begin{lem} Let $D$ be a bounded complete circular  domain in $\cx^n$ . If the standard and radial boundary distances on $D$ are
comparable (i.e., \eqref{eq-standardradial} holds), then $D$ satisfies the estimate \eqref{eq-kderestimate}.
\end{lem}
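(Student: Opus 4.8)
The plan is to exploit the circular symmetry of the Bergman kernel recorded in Lemma~\ref{prop-circular} in order to trade proximity of the first slot $w$ to $bD$ for proximity of a \emph{rescaled} second slot, and then to use the comparability hypothesis \eqref{eq-standardradial} to guarantee that both slots end up a controlled distance from $bD$. Fix $z\in D$ and write $\delta=d(z)$. Since \eqref{eq-kderestimate} is trivial when $z$ ranges over a fixed compact subset of $D$ (there $K(\cdot,z)$ is uniformly bounded and ordinary interior estimates apply), I may assume $z$ is close to $bD$, so that $\abs{z}$ is bounded below by a positive constant because $0\in D$. Writing $z^\ast=t_0 z$ for the radial boundary point, one has $t_0=1+\rho(z)/\abs{z}$. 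For any real $\lambda$ with $1\le\lambda<t_0$, applying Lemma~\ref{prop-circular} with base point $w/\lambda$ and scalar $\lambda$ yields the scaling identity
\[
K(w,z)=K(w/\lambda,\lambda z),
\]
valid for every $w\in D$: indeed $w/\lambda\in D$ by complete circularity, while $\lambda z\in D$ precisely because $\lambda$ stays below the radial threshold $t_0$.

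First I would treat the case $\alpha=0$, establishing a bound $\abs{K(w,z)}\le C\delta^{-(n+1)}$ that is \emph{uniform in $w$}. The decisive choice is $\lambda=1+\tfrac12\rho(z)/\abs{z}$, which keeps $\lambda$ bounded and lies strictly below $t_0$. A direct computation with the collinear radial boundary points gives $\rho(\lambda z)=\tfrac12\rho(z)$ and $\rho(w/\lambda)=\rho(w)+\abs{w}(1-1/\lambda)\ge\abs{w}(1-1/\lambda)$; combining $\abs{w}$ bounded below, $\rho(z)\ge\delta$, and $\abs{z}$ bounded, both of these radial distances are $\gtrsim\delta$. The comparability \eqref{eq-standardradial} then forces $d(w/\lambda)\gtrsim\delta$ and $d(\lambda z)\gtrsim\delta$. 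Feeding the scaling identity into the Cauchy--Schwarz inequality \eqref{eq-cauchy-schwarz} and the comparison estimate \eqref{eq-compare} gives
\[
\abs{K(w,z)}=\abs{K(w/\lambda,\lambda z)}\le\bigl(K(w/\lambda,w/\lambda)\,K(\lambda z,\lambda z)\bigr)^{1/2}\le C\,d(w/\lambda)^{-(n+1)/2}\,d(\lambda z)^{-(n+1)/2}\le C'\delta^{-(n+1)},
\]
uniformly in $w\in D$, as desired.

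To pass to derivatives I would differentiate the scaling identity in $w$, obtaining $(\partial_w^\alpha K)(w,z)=\lambda^{-\abs{\alpha}}(\partial_w^\alpha K)(w/\lambda,\lambda z)$. With the same choice of $\lambda$ (bounded, so $\lambda^{-\abs{\alpha}}\le1$) the evaluation point $w/\lambda$ sits at distance $\gtrsim\delta$ from $bD$, so a ball of radius comparable to $\delta$ about $w/\lambda$ lies in $D$. On that ball $K(\cdot,\lambda z)$ is holomorphic and, by the uniform estimate just proved applied at the fixed interior point $\lambda z$ (where $d(\lambda z)\gtrsim\delta$), is bounded by $C\delta^{-(n+1)}$. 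The standard interior Cauchy estimates then give
\[
\abs{(\partial_w^\alpha K)(w,z)}\le\abs{(\partial_w^\alpha K)(w/\lambda,\lambda z)}\le\frac{C_\alpha}{\delta^{\abs{\alpha}}}\,\delta^{-(n+1)}=C_\alpha\,\delta^{-\abs{\alpha}-(n+1)},
\]
which is exactly \eqref{eq-kderestimate} with $m=\abs{\alpha}+n+1$; in conjunction with Proposition~\ref{prop-condnr} this finishes the proof of Proposition~\ref{prop-examples}.

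The step I expect to be the main obstacle is the geometric heart of the $\alpha=0$ estimate: arranging, with a \emph{single} scaling factor $\lambda$, that both rescaled points $w/\lambda$ and $\lambda z$ are simultaneously pushed a distance $\gtrsim d(z)$ away from $bD$. This is precisely where \eqref{eq-standardradial} is indispensable, since only the comparability of the radial and Euclidean boundary distances lets one convert the easily computed radial displacements of the collinear points into genuine Euclidean clearance from the boundary. One must also take some care that $\lambda$ stays bounded --- handled by treating $z$ near the origin separately, where $d(z)$ is automatically large --- so that the factor $\lambda^{-\abs{\alpha}}$ appearing in the differentiated identity causes no loss.
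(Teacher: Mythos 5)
Your argument is in substance the same as the paper's: both proofs use the homogeneity identity of Lemma~\ref{prop-circular} to rewrite $K(w,z)$ as the kernel evaluated at a pair of points pushed radially away from $bD$ by an amount comparable to $\rho(z)$, and then combine \eqref{eq-standardradial}, the Cauchy--Schwarz bound \eqref{eq-cauchy-schwarz}, the volume comparison \eqref{eq-compare}, and Cauchy estimates on a polydisc of radius comparable to $d(z)$. The one genuine difference is the choice of dilation factor. The paper takes $\lambda=(1-2\delta/\abs{w})^{-1}$, which depends on $w$; this guarantees a fixed additive inward displacement of $w$ along its ray regardless of $\abs{w}$, but at the cost that differentiating $K(\lambda^{-1}w,\lambda z)$ in $w$ requires the Faa di Bruno formula and produces derivatives of $K$ in \emph{both} slots. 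Your $\lambda=1+\tfrac12\rho(z)/\abs{z}$ depends only on $z$, so differentiation in $w$ is a constant dilation and only the first slot is hit; this is a real simplification and the computation is correct.

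Two loose ends need patching. First, your inward displacement $\abs{w}(1-1/\lambda)$ of $w$ degenerates as $w\to 0$, and you invoke ``$\abs{w}$ bounded below'' without having reduced to that case ($w$, unlike $z$, ranges over all of $D$). The regime $\abs{w}\le d(0)/2$ must be treated separately --- it is precisely the paper's explicit second case --- and is easy: there $w$, and hence $w/\lambda$, already lies in a fixed compact subset of $D$, so $d(w/\lambda)\gtrsim 1\gtrsim\delta$ and your estimate goes through unchanged. Second, the dismissal of $z$ ranging over a fixed compact subset is not justified as stated: the required bound there is $\sup_{w\in D}\abs{\partial_w^\alpha K(w,z)}\le C$, and the problematic regime is $w$ near $bD$, where interior estimates in $w$ say nothing and boundedness of $K(\cdot,z)$ on all of $D$ for fixed interior $z$ is not automatic on a general bounded domain (Cauchy--Schwarz only yields $\sqrt{K(w,w)K(z,z)}$, and $K(w,w)$ blows up). Fortunately your own scaling argument disposes of this case as well: for such $z$ one has $\rho(z)/\abs{z}$ bounded below away from $z=0$ (and the ray through $z$ is irrelevant when $z$ is very near the origin, where one can take any fixed $\lambda>1$ close to $1$), so $\lambda-1\gtrsim 1$ and both rescaled points land at a uniformly positive distance from $bD$. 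With these two patches the proof is complete and yields \eqref{eq-kderestimate} with $m=\abs{\alpha}+n+1$.
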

\begin{proof} Without loss of generality we can assume that the diameter of $D$ is less than or equal to one, since  \eqref{eq-kderestimate}
holds on a domain if and only if it holds on any dilation.
We proceed as in \cite{BeBo}. We fix once for all $z\in D$.  We consider two cases. 
First assume that 
$w\in D$ is such that  $\abs{w}> \frac{1}{2}d(0)$.

We choose a number $0<\delta< \frac{d(0)}{4}$ such that
\[  \frac{4\delta}{d(0)-4\delta}<\frac{1}{2}\rho(z).\]
The number $\delta$ exists since $x\mapsto \frac{4x}{d(0)-4x}$ is increasing on $[0, \frac{d(0)}{4})$.
 Let
 $\lambda=\left(1-\frac{2\delta}{\abs{w}}\right)^{-1}$. For future use we note  that
\begin{align}
\rho(\lambda z)&=\rho(z)-(\lambda-1)\abs{z}\nonumber\\
&=\rho(z)-\frac{2\delta}{\abs{w}-2\delta}\abs{z}\nonumber\\
&\geq \rho(z) - \frac{2\delta}{\frac{\delta(0)}{2}-2\delta}\nonumber\\
&> \frac{1}{2}\rho(z), \label{eq-rholambdaz}
\end{align}
where the last line follows from the choice of $\delta$.
By Lemma~\ref{prop-circular}, 
\begin{align}K(w,z)&=K\left(\lambda^{-1}w, \lambda z\right)\nonumber\\
&= K\left(t, \lambda z\right)\label{eq-est1},
\end{align}
where $t=\left(1-\frac{2\delta}{\abs{w}}\right)w$, and therefore  we have that $\rho(t)\geq 2\delta.$ . Noting that we are considering such $w\in D$ as $\abs{w}>\frac{d(0)}{2}$,   we see that $\abs{\frac{\partial^\alpha}{\partial w^\alpha}\left(\frac{1}{\abs{w}}\right)}$, and therefore 
$\abs{\frac{\partial^\alpha}{\partial w^\alpha}\left(\frac{w_j}{\abs{w}}\right)}$ are bounded (the latter for each $j$.)  It follows that for any 
multi-index $\alpha$ with $\abs{\alpha}\geq 2$, we have that $\abs{\frac{\partial^{\alpha}}{\partial w^\alpha}t_j}\leq C \delta$,
 and that  $\abs{\frac{\partial^{\alpha}}{\partial w^\alpha}\lambda}\leq C \delta$, where here and in the sequel the constant $C$ depends on $\alpha$ but is independent of $z$ (and 
therefore $\delta$)  and $w$ (with $\abs{w}> \frac{d(0)}{2}$), but $C$ may have different values at different occurences. Using  the alternative representation of 
$K(w,z)$ in \eqref{eq-est1},  and the repeated 
use of the chain and the product rule (i.e. the Faa di Bruno formula), one may compute an expression for $\frac{\partial^\alpha}{\partial w^\alpha}K(w,z)$, in terms of the $t$ and $z$-derivatives of $K(t,z)$ and the $w$ derivatives of $t$ and $\lambda$. It follows  that 
\begin{equation}\label{eq-derk} \abs{\frac{\partial^\alpha}{\partial w^\alpha}K(w,z)}\leq C \delta\cdot \sum_{\beta+\gamma\leq \alpha} \abs{\frac{\partial^\beta}{\partial t^\beta}\frac{\partial^\gamma}{\partial \overline{z}^\gamma} K\left(t, \lambda z\right)},\end{equation}
since higher powers of $\delta$ may be absorbed into $\delta$ itself (since $\delta<\frac{1}{4}$.) Now, thanks to the comparability of the standard and radial distances to the boundary, we see that there is a polydisc of polyradius $C(\delta,\delta,\dots,\delta)$
with center at $t=\left(1-\frac{2\delta}{\abs{w}}\right)w$
 and located within $\{\zeta\in D\colon \rho(\zeta)>\delta\}$. Recalling that $K$ is holomorphic in the first and antiholomorphic in the second argument, and applying the Cauchy estimates in both arguments to this polydisc we conclude:
 
 \begin{align*}
 \abs{\frac{\partial^\beta}{\partial t^\beta}\frac{\partial^\gamma}{\partial \overline{z}^\gamma }K\left(t, \lambda z\right)}&\leq \frac{C}{\delta^{\abs{\beta}+\abs{\gamma}}}\sup_{\rho(t)>\delta}\sqrt{K(t,t)} \sqrt{K\left(\lambda z,\lambda z\right) }&\text{ using \eqref{eq-cauchy-schwarz}}\\
&\leq \frac{C}{\delta^{\abs{\beta}+\abs{\gamma}}}\delta^{-\frac{n+1}{2}}\rho(\lambda z)^{-\frac{n+1}{2}}&\text{ using \eqref{eq-compare}}\\
&\leq \frac{C}{\delta^{\abs{\beta}+\abs{\gamma}}}\delta^{-\frac{n+1}{2}} \rho(z)^{-\frac{n+1}{2}}&\text{ using \eqref{eq-rholambdaz}}\\
&\leq \frac{C}{d(z)^{\abs{\beta}+\abs{\gamma}+n+1}}.
 \end{align*}
Combining this with \eqref{eq-est1}, we conclude that
\[ \abs{\frac{\partial^\alpha}{\partial w^\alpha}K(w,z)}\leq \frac{C}{d(z)^{\abs{\alpha}+n}},\]
for $w$ such that $\abs{w}> \frac{d(0)}{2}$.

We now consider a $w\in D$ such that $\abs{w}\leq \frac{d(0)}{2}$.  Then there is an $\eta$ independent of $w$ such that $\rho(w)>\eta$. By the 
comparability of $\rho$ and $d$, we conclude that there is an $\epsilon>0$ such that the polydisc centered at $w$ and of radius $\epsilon$ is 
contained in the set $\{\zeta\in D\colon d(\zeta)>\epsilon\}$. (Note that $\epsilon$ depends only on $d(0)$ and the constant $C$ in 
\eqref{eq-standardradial}.) Applying the Cauchy estimates to this polydisc we see that
\begin{align*}\abs{\frac{\partial^\alpha}{\partial w^\alpha}K(w,z)}&\leq \frac{C}{\epsilon^{\abs{\alpha}}}\sup_{d(w)>\epsilon}\abs{K(w,z)}\\
&\leq  C\sqrt{K(z,z)}\sup_{d(w)>\epsilon}\sqrt{\abs{K(w,w)}}\\
&\leq Cd(z)^{-\frac{(n+1)}{2}}\epsilon^{-\frac{(n+1)}{2}},
\end{align*}
therefore the estimate \eqref{eq-kderestimate} is established and the result is proved.
\end{proof}
\section{Hopf Lemma on domains with generic corners }

\noindent Let $D \subset \mathbb C^n$ be a smoothly bounded domain and $\phi : D \rightarrow [-\infty, 0)$ a plurisubharmonic 
exhaustion function. The Hopf lemma asserts that $\vert \phi(z) \vert$ decays to zero near the boundary $bD$ at least at the rate of ${\rm dist}(z, bD)$, i.e.,
\begin{equation}
\vert \phi(z) \vert \gtrsim {\rm dist}(z, bD)
\end{equation}
for all $z \in D$. For a given proper holomorphic mapping $f : D \rightarrow G$, this estimate plays a useful role in controlling the ratio ${\rm 
dist}(f(z), bG)/{\rm dist}(z, bD)^{\eta}$ for some $\eta > 0$. Thus we are interested in obtaining (4.1) on non-smooth domains as well. For piecewise smooth domains, this was done in 
\cite{Be,Ra} by showing that each point sufficiently close to the boundary lies in a cone of uniform aperture with vertex on the boundary. In other words, a planar sector of 
uniform aperture containing  a given point near the boundary was shown to exist. On a product domain $G$, it is evident that a sector whose aperture angle is $\pi/2$, i.e., a quadrant, 
can be fitted at each boundary point. Therefore the techniques of \cite{Be} show that a negative plurisubharmonic exhaustion $\phi$ on a product domain satisfies
\[
\vert \phi(z) \vert \gtrsim {\rm dist}(z, b G)^{2} 
\]
for all $z \in G$. A different approach was used in \cite{prop} for product domains wherein a disc that satisfies certain uniform geometric properties 
was used instead of a sector. Similar ideas can be applied to domains with generic corners  as well which yield a better growth estimate.

\begin{prop}
Let $\Omega \subset \mathbb C^n$ be a domain with generic corners . Let $\phi : \Omega \rightarrow [-\infty, 0)$ be a plurisubharmonic exhaustion. Then 
\[
\vert \phi(z) \vert \gtrsim {\rm dist}(z, bD)
\]
for all $z \in \Omega$.
\end{prop}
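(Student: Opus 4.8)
The plan is to reduce the estimate to a one–dimensional boundary–point (Hopf) lemma for subharmonic functions by restricting $\phi$ to a carefully chosen family of analytic discs. First fix a small $\rho_1>0$ so that $K=\{z\in\Omega\colon \dist(z,b\Omega)\ge\rho_1\}$ is a nonempty compact subset of $\Omega$; since $\phi$ is upper semicontinuous and strictly negative, $M:=-\sup_K\phi>0$. The one–dimensional input I would isolate is the following quantitative Hopf lemma: if $u$ is subharmonic on the unit disc $\D$, $u\le 0$, and $u\le -M$ on the sub-disc $B(\zeta_0,R_1/2)$, where $\zeta_0=1-R_1$ (so that $B(\zeta_0,R_1)$ is internally tangent to $\partial\D$ at $1$), then $u(a)\le -c\,(1-a)$ for all real $a$ with $1-a<R_1/2$, with $c>0$ depending only on $M$ and $R_1$. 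This is proved by the classical barrier $\beta(\zeta)=e^{-\alpha\norm{\zeta-\zeta_0}^2}-e^{-\alpha R_1^2}$ on the annulus $R_1/2<\norm{\zeta-\zeta_0}<R_1$: for $\alpha$ large $\beta$ is subharmonic there, so $-\tfrac{M}{\beta_0}\beta$ (with $\beta_0$ the value of $\beta$ on the inner circle) is superharmonic and dominates $u$ on the annulus boundary; the comparison principle together with $\partial_\nu\beta(1)>0$ yields the asserted linear decay.

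The geometric heart of the argument is to attach, to each $z\in\Omega$ near $b\Omega$, a round complex disc realizing the hypotheses of this lemma. Let $b\Omega_{j_1},\dots,b\Omega_{j_k}$ be the faces passing through the corner nearest to $z$. By the genericity condition \eqref{eq-c-trv}, the forms $\partial r_{j_i}$ are $\cx$–linearly independent at that corner, so the linear map $w\mapsto(\langle\partial r_{j_i},w\rangle)_{i}$ is onto $\cx^k$; solving $\langle\partial r_{j_i},w\rangle=-1$ for all $i$ produces an inward complex direction $w$ with $\norm{w}$ bounded uniformly, the bound coming from the uniform lower bound on the relevant singular value guaranteed by compactness of $b\Omega$ and \eqref{eq-c-trv}. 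Moving from $z$ in the direction $-w$ meets $b\Omega$ at a point $p=z-s_0w$ with $s_0\asymp\dist(z,b\Omega)$, and I would take $h(\zeta)=c+\zeta(p-c)$, the round disc in the complex line through $z$ and $p$, centered at $c=z+R_0w$ for a fixed small $R_0$ and of radius making $h(1)=p$. A first–order computation using $\langle\partial r_{j_i},w\rangle=-1$ gives $r_{j_i}(h(\zeta))\le r_{j_i}(z)+2s_0\le 0$ for every $\zeta\in\overline{\D}$, so the disc lands on $b\Omega$ only at $\zeta=1$ and is otherwise interior; moreover, for the parameter $a$ with $h(a)=z$ one has $a=R_0/(R_0+s_0)$, hence $1-a\asymp\dist(z,b\Omega)$, while points $h(\zeta)$ with $\norm{\zeta-1}$ bounded below lie in $K$.

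With these discs in hand the proof concludes quickly. The function $u=\phi\circ h$ is subharmonic on $\D$, satisfies $u\le 0$, and $u\le -M$ on the inner sub-disc $B(\zeta_0,R_1/2)$ (since there $h$ maps into $K$). The one–dimensional lemma then gives $\abs{\phi(z)}=\abs{u(a)}\ge c\,(1-a)\gtrsim\dist(z,b\Omega)$, while for $z$ with $\dist(z,b\Omega)\ge\rho_1$ the estimate is trivial. Note that $\phi(z)=-\infty$ is permitted throughout, and the comparison step is insensitive to it.

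I expect the main obstacle to be the uniform disc construction near the singular set $b\sng{\Omega}$: controlling the second–order (curvature) terms that a flat disc tangent to a face picks up, and handling points $z$ for which several faces are almost equidistant. Both are overcome by taking the single radius $R_0$ uniformly small. The faces are finitely many fixed compact hypersurfaces, so their curvatures are bounded, and a round disc of curvature $1/R_0$ internally tangent to a face of bounded curvature remains on the interior side once $R_0$ is smaller than the reciprocal of the maximal curvature; crucially this requires no pseudoconvexity of the individual faces. The uniform lower bound on the angles between meeting faces, again a consequence of \eqref{eq-c-trv}, ensures that the same $R_0$ works simultaneously for all faces through a corner and for all $z$ near $b\Omega$.
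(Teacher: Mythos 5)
Your argument has the same outline as the paper's: attach to each $z$ near $b\Omega$ an analytic disc that reaches $b\Omega$ (comparably to $\dist(z,b\Omega)$) and whose far portion lies in a fixed compact subset of $\Omega$, then apply a one--variable Hopf lemma to $\phi\circ h$. The paper packages the second step as ``rolling an analytic disc'' and quotes \cite[Theorem 4.4]{prop}; your barrier proof of the one--dimensional statement is correct and self--contained. The disc itself is chosen differently: the paper uses the complex line spanned by the real inner normal to the nearest face at the nearest boundary point $\zeta(z)$, centred at a point $\kappa(z)$ of an inner level set and with radius equal to $\dist(\kappa(z),b\Omega_1)$, so containment in $\{r_1<0\}$ is automatic; you instead solve $\langle\partial r_{j_i},w\rangle=-1$ for all nearby faces simultaneously. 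That is a genuinely different and attractive choice: along such a ``diagonal'' direction the distance to \emph{every} relevant face grows at a uniform rate, which matters precisely for points lying at very different small distances from several faces.

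The gap is the containment claim $r_{j_i}(h(\zeta))\le r_{j_i}(z)+2s_0\le 0$ on $\overline{\D}$. Writing $h(\zeta)=z+\lambda w$ with $\lambda=R_0-\zeta(R_0+s_0)$, Taylor expansion gives $r_{j_i}(h(\zeta))=r_{j_i}(z)-2\Re\lambda+O(\abs{\lambda}^2)$, and on the disc $\abs{\lambda-R_0}\le R_0+s_0$ one has $\abs{\lambda}^2\le 2R_0\Re\lambda+2R_0 s_0+s_0^2$; after the cancellation this produces only $r_{j_i}(h(\zeta))\le r_{j_i}(z)+2s_0+O(s_0^2)$, and since $r_{j_i}(z)+2s_0$ vanishes to first order for the face containing $p$, the upper bound is $O(s_0^2)$, not $\le 0$. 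So the disc may leave $\Omega$ near $\zeta=1$ and $\phi\circ h$ need not be defined on all of $\D$. The proposed repair by internal tangency plus bounded curvature does not apply: the boundary circle of $h$ is not tangent at $p$ to the trace of $\{r_{j_{i_0}}=0\}$ on the line $z+\cx w$, because $\langle\partial r_{j_{i_0}}(p),w\rangle$ is real only at the point where the linear system was solved and equals $-1+O(s_0)$ with a generically nonzero imaginary part at $p$; the two curves therefore cross at an angle of order $s_0$, and a circle crossing a curve transversally leaves the good side no matter how small its radius. The standard fix is to aim the disc at $z-(s_0/2)w$ rather than at the boundary point, i.e.\ take radius $R_0+s_0/2$: the first--order bound then has slack $s_0$, which absorbs all quadratic errors, while $1-a$ remains comparable to $s_0\asymp\dist(z,b\Omega)$. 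You should also replace ``the faces through the corner nearest to $z$'' by ``all faces with $\dist(z,b\Omega_j)<\epsilon_0$'' for a fixed small $\epsilon_0$ (for $z$ near a regular point the nearest corner is irrelevant, and the nearby faces need not all contain a common lowest--dimensional stratum), using compactness and transversality to guarantee that these faces meet, so that \eqref{eq-c-trv} supplies the vector $w$ with uniform bounds.
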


\noindent We first recall some geometric conditions on an analytic disc from \cite{prop} that are sufficient to prove (4.1). Let $D \subset \mathbb C^n$ be a bounded domain and take a 
tubular neighborhood $U$ of $bD$. The domain $U \cap D$ whose boundary consists of two disjoint components, namely  $bD$ and $B = bU \cap D$ will be relevant to us. Suppose that there 
is a constant $\theta = \theta(D) \in (0, 2 \pi)$ and points $\kappa(z) \in B, \zeta(z) \in bD$ (both possibly non-unique) for every $z \in U \cap D$ such 
that the following hold:
\begin{itemize}
\item[(i)] The points $\zeta(z), z, \kappa(z)$ are collinear and $z$ lies between $\zeta(z)$ and $\kappa(z)$. 
\item[(ii)] $\zeta(z)$ is the nearest point to $z$ on $bD$ which means that $\vert \zeta(z) - z \vert = {\rm dist}(z, bD)$.
\item[(iii)] The affine analytic disc $\alpha_z : \Delta(0, 1) \rightarrow \mathbb C^n$ given by
\[
\alpha_z(\lambda) = \kappa(z) + \lambda(\zeta(z) - \kappa(z))
\]
lies in $D$.
\item[(iv)] There exists a neighborhood of $\partial \Omega$ in $\mathbb C^n$, say $V$ which is compactly contained in $U$ such that the portion of the 
boundary of $\alpha_z(\Delta(0, 1))$, i.e., $\alpha_z(b\Delta(0, 1))$ that lies in $D \setminus V$ subtends an angle of at least $\theta = \theta(D) > 0$ 
at the centre $\kappa(z)$. Note that $\alpha_z(0) = \kappa(z)$ and $\alpha_z(1) = \zeta(z)$.
\end{itemize}
In short, these properties allow the existence of an analytic disc passing through a given point $p$ near $bD$ and also containing $p^{\ast}$, a nearest point to $p$ on $bD$, whose 
centre is at a uniform distance away from $bD$ and such that a uniform piece of its boundary is also uniformly away from $bD$. We say that it is possible to {\em roll an analytic disc} 
in $D$ if these properties hold. Theorem 4.4 in \cite{prop} shows that the Hopf lemma holds on a domain if it is possible to roll an analytic disc in it.

\begin{proof}
It suffices to show that it is possible to roll an analytic disc in a domain $\Omega$ with generic corners  as in Definition 1.1. Fix a point $p \in b \Omega$ and let $S \subset 
\{1, 2, \ldots, N\}$ be such that 
\[
p \in B_S = \bigcap_{j \in S} b \Omega_j.
\]
Without loss of generality we may assume that $S = \{1, 2, \ldots, k\}$ where $k \le N$. Then
\[
r_1(p) = r_2(p) = \ldots = r_k(p) = 0
\]
and (2.1) holds. Thanks to this transversality condition, we may choose coordinates in a neighborhood $U$ around $p = 0$ so that the defining functions become
\[
r_j(z) = 2 \Re z_j + \phi_j(z)
\]
where $\phi_j \in \mathcal C^{\infty}(U)$ and $d \phi_j(0) = 0$ for all $1 \le j \le k$. The smoothness of each $r_j$ implies that for a given point $z \in U$ there is a unique point 
$z^{\ast}_j$ on $\{r_j = 0\} = b \Omega_j \cap U$ such that
\[
\tau_j = {\rm dist}(z, b \Omega_j \cap U) = \vert z - z^{\ast}_j \vert
\]
for all $1 \le j \le k$. The analytic disc
\[
\zeta \mapsto z + \zeta \tau_j \left( {\overline \partial} r_j(z^{\ast}_j) \right)
\]
for $\vert \zeta \vert < 1$ is centered at $z$ and is contained in $\{r_j < 0\} = \Omega_j \cap U$. Thus through a given point $z \in \Omega \cap U$ there are $k$ analytic discs which 
approximately point in the direction of the coordinate axes $z_1, z_2, \ldots, z_k$. This observation will allow us to choose the right direction for the disc $\alpha_z(\lambda)$ as in 
(iii) above. Let $C > 0$ be such that
\begin{equation}
C^{-1} \vert r_j(z) \vert \le {\rm dist}(z, b \Omega_j \cap U) \le C \vert r_j(z) \vert
\end{equation}
for all $z \in \Omega \cap U$ and $1 \le j \le k$. Furthermore, since $d \phi_j(0) = 0$ we may also assume that $\vert d \phi_j(z) \vert \le 1/2C$ for all $z \in \Omega \cap U$. For 
$\epsilon > 0$ let
\[
\Omega_{\epsilon} = \{ z \in U : r_j(z) < -\epsilon, \; 1 \le j \le k \}.
\]
Pick $z \in U \cap \Omega$ and note that the nearest point to it (which is possibly non-unique) on $b \Omega \cap U$ lies on one or possibly more of the boundaries $b \Omega_j \cap U = 
\{r_j = 0\}$. For the sake of definiteness, assume that it lies on $b \Omega_1 \cap U = \{r_1(z) =0\}$ and denote it by $\zeta(z)$. Extend the real inner normal $l$ to the smooth real 
hypersurface $\{ r_1(z) = 0\}$ at $\zeta(z)$ till it intersects $b \Omega_{\epsilon} \cap U$. Denote this point of intersection by $\kappa(z)$. Note that $\abs{\kappa(z) - \zeta(z)} = 
{\rm dist}(\kappa(z), \{r_1(z) = 0\} \cap U) \ge \epsilon/C$ by (4.2). The affine analytic disc
\[
\alpha_z(\lambda) = \kappa(z) + \lambda(\zeta(z) - \kappa(z))
\]
defined for $\vert \zeta \vert < 1$ is evidently contained in $\{r_1 < 0\} \cap U$. For $1 < j \le k$ observe that
\[
\abs{r_j(\alpha_z(\lambda)) - r_j(\kappa(z))} = \vert \lambda(\zeta(z) - \kappa(z)) \cdot dr_j(\tilde z) \vert
\]
for some $\tilde z = \tilde z(\lambda) \in \Omega \cap U$. Again (4.2) shows that $\abs{\kappa(z) - \zeta(z)} \le C \epsilon$ and by construction we have $\vert d r_j(\tilde z) 
\vert \le 1/2C$. Combining these estimates shows that
\[
r_j(\alpha_z(\lambda)) \le r_j(\kappa(z)) + \epsilon/2 \le - \epsilon + \epsilon/2 = -\epsilon/2.
\]
Thus the analytic disc $\alpha_z(\lambda)$ is contained in $\{r_1 < 0\} \cap U$ and stays at a uniform distance from the other hypersurfaces $\{ r_j = 0\} \cap U$ where $1 < j \le k$. 
Let $V = \{z \in U : \vert r_j(z) \vert < \epsilon/2, \; 1\le j \le k \}$ -- this is a neighborhood of $b \Omega \cap U$ of uniform width $\epsilon/2$. The smoothness of $r_1$ 
shows that there is a uniform portion of $b \alpha_z(\lambda)$ that lies in $(\Omega \cap U) \setminus \{r_1 > -\epsilon/2\}$. The arguments given above show that the closure of 
$\alpha_z(\lambda)$ lies in $(\Omega \cap U) \setminus \{r_j > - \epsilon/2, \; 1 < j \le k \}$ and hence a uniform portion of $b \alpha_z(\lambda)$ lies in $(\Omega \cap U) 
\setminus V$. These estimates are uniform for all $z \in \Omega \cap U$ and hence for all $z$ near $b \Omega$ by compactness. Hence it is 
possible to roll an analytic disc in $\Omega$.
\end{proof}

\section{Proper maps of domains with generic corners}
\subsection{Distortion estimate on domains with generic corners }
We now generalize some well-known properties of proper maps of smoothly bounded pseudoconvex domains
to domains with generic corners.  In these results, $D$ and $G$ are  pseudoconvex domains with generic corners, and $f:D\to G$ is a proper holomorphic 
mapping.  Let  $Z= \{f(z) : \det f'(z) = 0 \} \subset G$ be the set of critical values of $f$.
Then $Z$ is a codimension one subvariety in $G$, and on $G \setminus Z$, we can define 
locally well-defined holomorphic branches  $F_1, F_2, \ldots, F_m$ of $f^{-1}$.
The following consequence of the  Hopf lemma is well-known in the case of smoothly bounded domains.
\begin{prop}\label{prop-distest}  There exists a $\delta \in (0, 1)$ such that
\[
{\rm dist}(z, b D)^{1/\delta} \lesssim {\rm dist}(f(z), b G) \lesssim {\rm dist}(z, b D)^{\delta}
\]
for all $z \in D$. 
\end{prop}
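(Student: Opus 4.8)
The plan is to transfer the Hopf lemma for domains with generic corners established in the previous section across the proper map $f$ in both directions, using in each case a bounded negative plurisubharmonic exhaustion of Diederich--Fornaess type. Since $D$ and $G$ are bounded pseudoconvex domains with generic corners, their boundaries are Lipschitz, and so (cf.\ \cite{DF}) each admits a bounded negative plurisubharmonic exhaustion comparable to a power of the boundary distance: there are exponents $\eta_D,\eta_G\in(0,1)$ and bounded plurisubharmonic functions $\psi_D:D\to[-\infty,0)$ and $\psi_G:G\to[-\infty,0)$ with $\abs{\psi_D(\zeta)}\lesssim \dist(\zeta,bD)^{\eta_D}$ and $\abs{\psi_G(w)}\lesssim \dist(w,bG)^{\eta_G}$. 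After a harmless dilation I may assume $\dist(\cdot,bD)\le 1$ and $\dist(\cdot,bG)\le 1$ throughout, so that lowering an exponent only weakens an estimate; I will set $\delta=\min(\eta_D,\eta_G)$ at the end.

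For the lower bound I would observe that $\psi_G\circ f$ is a negative plurisubharmonic exhaustion of $D$: it is plurisubharmonic because $f$ is holomorphic and $\psi_G$ plurisubharmonic, and it tends to $0$ along $bD$ because $f$ is proper, so $z\to bD$ forces $f(z)\to bG$. The Hopf lemma applied on $D$ gives $\abs{\psi_G(f(z))}\gtrsim \dist(z,bD)$, while the Diederich--Fornaess bound gives $\abs{\psi_G(f(z))}\lesssim \dist(f(z),bG)^{\eta_G}$. Combining these yields $\dist(f(z),bG)^{\eta_G}\gtrsim \dist(z,bD)$, that is $\dist(f(z),bG)\gtrsim \dist(z,bD)^{1/\eta_G}$, which is the left-hand inequality once $\delta\le\eta_G$.

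For the upper bound I would push $\psi_D$ forward through the inverse branches. On $G\setminus Z$, where the local inverses $F_1,\dots,F_m$ of $f$ are defined, set $u(w)=\max_{1\le j\le m}\psi_D(F_j(w))$. Since the monodromy of $f^{-1}$ merely permutes the branches, $u$ is a well-defined single-valued negative plurisubharmonic function on $G\setminus Z$, and it is bounded because $\psi_D$ is. As $Z$ is a proper analytic subvariety of $G$, the removable singularity theorem for plurisubharmonic functions lets $u$ extend plurisubharmonically to all of $G$, and properness of $f$ again shows $u\to 0$ along $bG$, so $u$ is a negative plurisubharmonic exhaustion of $G$. The Hopf lemma on $G$ then gives $\abs{u(w)}\gtrsim \dist(w,bG)$. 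Now fix $z\in D$ with $f(z)=w\notin Z$; then $z$ is one of the branch values $F_j(w)$, so $\abs{u(w)}=\min_j\abs{\psi_D(F_j(w))}\le\abs{\psi_D(z)}\lesssim\dist(z,bD)^{\eta_D}$. Hence $\dist(f(z),bG)\lesssim\dist(z,bD)^{\eta_D}$, and since both sides are continuous in $z$ and the set $f^{-1}(Z)$ is nowhere dense, this extends to all $z\in D$; this is the right-hand inequality once $\delta\le\eta_D$. Taking $\delta=\min(\eta_D,\eta_G)\in(0,1)$ and using $\dist\le 1$ to pass from $\eta_G,\eta_D$ and $1/\eta_G$ to $\delta$ and $1/\delta$ completes the proof.

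I expect the main obstacle to be the first ingredient rather than the transfer: one must know that a Diederich--Fornaess bounded plurisubharmonic exhaustion with a genuine H\"older control on its modulus exists on a pseudoconvex domain whose boundary has generic corners and is therefore only Lipschitz along the edges, so the classical smooth construction must be replaced by a version valid for such piecewise smooth boundaries. A secondary point requiring care is the justification that $u$ extends plurisubharmonically across the branch locus $Z$ and remains a genuine exhaustion; both follow from standard potential theory once the boundedness of $u$ and its boundary behaviour are in hand.
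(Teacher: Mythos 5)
Your proposal is correct and follows essentially the same route as the paper: pull back the Diederich--Fornaess type exhaustion of $G$ through $f$ and apply the Hopf lemma on $D$ for the lower bound, and push the exhaustion of $D$ forward via $\psi=\max_j \varrho_D\circ F_j$ (extended across $Z$) and apply the Hopf lemma on $G$ for the upper bound. The one ingredient you flag as the main obstacle --- existence of a bounded negative plurisubharmonic exhaustion with H\"older decay on a pseudoconvex domain with generic corners --- is disposed of in the paper either by citing Harrington's result for Lipschitz domains or simply by taking $\varrho=\max_{1\le j\le N}\varrho_j$, where $\varrho_j$ is the classical Diederich--Fornaess exhaustion of the smooth pseudoconvex piece $\Omega_j$.
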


\begin{proof} We begin by noting that if $\Omega$ is a pseudoconvex domain with generic corners, then there is a  negative strictly \psh exhaustion $\varrho$
of $\Omega$  which decays to  to zero at the boundary no faster than a power of the distance to the boundary, i.e., for some $0 < \eta < 1$
and all $z\in \Omega$ we have 
\[ \abs{\varrho(z)}\lesssim \dist(z,b\Omega)^{\eta}.\]
This follows directly (even for Lipschitz $\Omega$) from \cite{harrington}. We can also deduce it from the fact that if  as in
 Definition~\ref{defn-main},the domain $\Omega$ is represented as an intersection $\cap_{j=1}^N \Omega_j$ of smoothly bounded pseudoconvex
domains, then by famous results of Diederich and Fornaess \cite{difo}, each $\Omega_j$ admits a bounded \psh exhaustion $\varrho_j$ 
satisfying $\abs{\varrho_j(z)}\lesssim \dist(z, b\Omega_j)^{\eta_j}$ for some $\eta_j\in (0,1)$ and for each $z\in \Omega_j$. We can simply 
take $\varrho = \max_{1\leq  j\leq N}\varrho_j$.

\medskip

Therefore let $\varrho_D$ and $\varrho_G$ be  bounded plurisubharmonic exhaustions on $D$ and $G$ such that  for some $\eta, \tau\in (0,1)$ 
and 
\[
\abs{ \varrho_D(z) } \lesssim {\rm dist}(z, b D)^{\eta}
\]
for all $z \in D$ , and 
\[
\abs{ \varrho_G(w)} \lesssim {\rm dist}(z, b G)^{\tau}
\]
for all $w \in G$. Then $\varrho_G \circ f$ is a negative plurisubharmonic exhaustion on $D$ and satisfies
\[
- \varrho_G \circ f(z) = \vert \varrho_G \circ f(z) \vert \gtrsim{\rm dist}(z, b D)
\]
for all $z \in D$ by the Hopf lemma. Thus we get
\[
{\rm dist}(z, b D) \lesssim - \varrho_G \circ f(z) \lesssim {\rm dist}(f(z), b G)^{\tau}
\]
which is the left side inequality in the proposition.

\medskip

Recall that $F_1,\dots F_m$ denote the branches of the inverse mapping $f^{-1}$, which are locally well-defined on $G\setminus Z$, where $Z$ is the set of critical values of the
mapping $F$. Then
\[
\psi = \max\{ \varrho_D \circ F_j : 1 \le j \le m\}
\]
is a bounded continuous plurisubharmonic function on $G \setminus Z$ which extends to a plurisubharmonic exhaustion on $G$. Therefore, for each $1 \le j \le m$ and $w \in G$, we have
\[
-\varrho_D \circ F_j(w) \ge -\psi(w) = \vert \psi(w) \vert \gtrsim {\rm dist}(w, b G)
\]
where the last inequality follows from the Hopf lemma. Rewriting this as
\[
\vert \varrho_D(z) \vert = -\varrho_D(z) \gtrsim {\rm dist}(f(z), b G)
\]
and combining with the rate of decay of $\varrho_D$ near $b D$ we get 
\[
{\rm dist}(f(z), b G) \lesssim {\rm dist}(z, b D)^{\eta}
\]
for all $z \in D$ which completes the proof.
\end{proof}

\subsection{Smoothness of the Jacobian up to the boundary}
We now note that the following lemma, well-known for smoothly bounded domains,  continues to hold for domains with generic corners.
For a domain $\Omega$ in $\cx^n$, we denote by $\mathcal{H}^\infty(\Omega)$ the space $\mathcal{O}(\Omega)\cap \mathcal{C}^\infty(\overline{\Omega})$ 
of holomorphic functions on $\Omega$ which are smooth up to the boundary of $\Omega$.

\begin{lem}\label{lem-jacobian}
Suppose that $D$ satisfies Condition R, and  let $u=\det(f\rq{})$ be the Jacobian determinant  of the mapping $f$.
If  $h\in \mathcal{H}^\infty(G)$, we have
\[ u \cdot (h\circ f)\in \mathcal{H}^\infty(D).\]
\end{lem}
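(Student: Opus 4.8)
The plan is to realize $\Psi:=u\cdot(h\circ f)$, which is manifestly holomorphic on $D$, as a Bergman projection of a function vanishing to arbitrarily high order at $bD$, and then to use the kernel estimate of Proposition~\ref{prop-condnr} to bound every derivative of $\Psi$ on $D$. First, by the area formula for the branched covering $f$, of some degree $\nu$, one has $\int_D\abs{u}^2\,dV=\nu\,\mathrm{Vol}(G)<\infty$, so $u\in\mathcal{H}(D)$; as $h\circ f$ is bounded and holomorphic, $\Psi\in\mathcal{H}(D)$. The central ingredient is Bell's transformation formula
\[ P_D\bigl(u\cdot(g\circ f)\bigr) = u\cdot\bigl((P_G g)\circ f\bigr),\qquad g\in L^2(G), \]
relating the Bergman projections $P_D$ and $P_G$ of $D$ and $G$.

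To establish this I would introduce the operator $\Phi_f\colon L^2(G)\to L^2(D)$ given by $\Phi_f g=u\cdot(g\circ f)$, which is bounded by the change-of-variables identity above, and compute its Hilbert-space adjoint. Writing the local inverse branches $F_1,\dots,F_\nu$ of $f$ on $G\setminus Z$ and changing variables $z=F_j(w)$, so that $dV(z)=\abs{u(F_j(w))}^{-2}\,dV(w)$, one finds
\[ \Phi_f^*\psi(w)=\sum_{j=1}^\nu\frac{\psi(F_j(w))}{u(F_j(w))}=:\Lambda\psi(w). \]
Since $P_D,P_G$ are self-adjoint, the formula $P_D\Phi_f=\Phi_f P_G$ is equivalent to $\Phi_f^*P_D=P_G\Phi_f^*$, and this follows once one checks that $\Phi_f$ carries $\mathcal{H}(G)$ into $\mathcal{H}(D)$ (immediate) and that $\Lambda=\Phi_f^*$ carries $\mathcal{H}(D)$ into $\mathcal{H}(G)$. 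The function $\Lambda\psi$ is visibly holomorphic and single-valued on $G\setminus Z$; I expect the genuinely delicate point, the main obstacle, to be its holomorphic continuation across the critical-value variety $Z$. I would treat this by a Riemann-type removable-singularity argument: on ramified fibres the apparent poles arising from the zeros of $u$ cancel in the symmetric sum, leaving $\Lambda\psi$ locally bounded near $Z$, and a bounded holomorphic function on the complement of the codimension-one subvariety $Z$ extends holomorphically to all of $G$.

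With the transformation formula in hand, apply the Bell operator $\Phi^s$ of Lemma~\ref{lem-bell} to the domain $G$. Since $h\in\mathcal{H}^\infty(G)$ is holomorphic, $P_G h=h$, and property (i) of Lemma~\ref{lem-bell} gives $P_G(\Phi^s_G h)=P_G h=h$. Taking $s=(p,p,\dots,p)$ for a positive integer $p$ and substituting $g=\Phi^s_G h$ into the transformation formula yields
\[ \Psi = u\cdot(h\circ f) = P_D\bigl(u\cdot((\Phi^s_G h)\circ f)\bigr) = \int_D K_D(\,\cdot\,,z)\,v_p(z)\,dV(z), \]
where $v_p := u\cdot((\Phi^s_G h)\circ f)$ and, crucially, the left-hand side is independent of $p$. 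By property (ii) of Lemma~\ref{lem-bell} together with $\dist(w,bG)=\min_\ell d_{G,\ell}(w)$ we have $\abs{\Phi^s_G h(w)}\lesssim \prod_\ell d_{G,\ell}(w)^{p}\lesssim \dist(w,bG)^{p}$, so the distortion estimate $\dist(f(z),bG)\lesssim\dist(z,bD)^\delta$ of Proposition~\ref{prop-distest} gives $\abs{v_p(z)}\lesssim \abs{u(z)}\,\dist(z,bD)^{\delta p}$.

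Finally, fix a multi-index $\alpha$ and let $m=m(\alpha)$ be the exponent in the kernel estimate $\abs{\partial_w^\alpha K_D(w,z)}\le C\,\dist(z,bD)^{-m}$ furnished by Proposition~\ref{prop-condnr}. Differentiating under the integral sign and applying this estimate, the bound on $v_p$, and the Cauchy--Schwarz inequality with $u\in L^2(D)$, we obtain
\[ \abs{\partial_w^\alpha\Psi(w)} \le C\int_D \dist(z,bD)^{-m}\abs{v_p(z)}\,dV(z) \le C\,\norm{u}_{L^2(D)}\Bigl(\int_D \dist(z,bD)^{2(\delta p - m)}\,dV(z)\Bigr)^{1/2}, \]
which is finite and independent of $w$ as soon as $p$ is chosen large enough that $\delta p - m > -\tfrac12$. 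Since $\Psi$ itself does not depend on $p$, letting $\alpha$ vary over all multi-indices shows that every holomorphic derivative, and hence every real derivative, of $\Psi$ is bounded on $D$. A domain with generic corners is quasiconvex, so bounded derivatives of all orders force $\Psi$ to extend to an element of $\mathcal{C}^\infty(\overline{D})$; being holomorphic on $D$, $\Psi = u\cdot(h\circ f)\in\mathcal{H}^\infty(D)$, as claimed.
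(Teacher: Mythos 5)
Your argument is correct, and its skeleton is the same as the paper's: both proofs rest on Bell's transformation formula $P_D(u\cdot(g\circ f))=u\cdot((P_Gg)\circ f)$, both replace $h$ by $g_s=\Phi^s h$ using the Bell operator on $G$ so that $g_s$ vanishes to high order at $bG$ while $P_Gg_s=h$, and both transfer that vanishing to $D$ via the distortion estimate of Proposition~\ref{prop-distest}. Where you diverge is the final regularity step. The paper invokes Condition R in its operator form (for each $\ell$ there is a $k$ with $P:\mathcal{C}^k(\overline{D})\to\mathcal{C}^\ell(\overline{D})$) and must therefore control $k$ derivatives of $u\cdot(g_s\circ f)$, which it does by combining Cauchy estimates on $D^\alpha f_j$ and $D^\alpha u$ with the chain and Leibniz rules, choosing $\sigma$ large enough to absorb the resulting blow-up. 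You instead invoke the kernel-derivative form of Condition R (Proposition~\ref{prop-condnr}) and differentiate under the integral sign, which requires only a \emph{pointwise} bound on $v_p$ (no derivatives of $f$, $u$, or $g_s\circ f$) at the cost of a Cauchy--Schwarz step using $u\in L^2(D)$. Your route is somewhat cleaner computationally, but it leans on Proposition~\ref{prop-condnr}, whose proof the paper only sketches by reference to Bell--Boas, whereas the paper's route uses Condition R directly. Two minor remarks: you supply a proof sketch of the transformation formula, which the paper simply cites as classical -- that is fine, though in your removable-singularity step across $Z$ the symmetric sum $\sum_j\psi(F_j(w))/u(F_j(w))$ is more safely handled by observing that it is locally square-integrable (being the image of $\psi$ under the bounded operator $\Phi_f^*$) than by asserting local boundedness, since $\psi\in\mathcal{H}(D)$ need not be bounded; and your closing appeal to quasiconvexity of the piecewise smooth domain to pass from bounded derivatives of all orders to $\mathcal{C}^\infty(\overline{D})$ is the same principle the paper uses implicitly when it reduces membership in $\mathcal{C}^k(\overline{D})$ to boundedness of derivatives of order $k+1$.
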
 
\begin{proof} We adapt the classical proof from \cite{Be1}. 
Let $\ell$ be a given positive integer. We need to show that $u \cdot (h\circ f)\in \mathcal{C}^\ell(\overline{D})$.
Denote by $P$ and $Q$ the Bergman projections on the domains $D$ and $G$ respectively. 
Now, thanks to the classical
transformation formula  for the Bergman projection, we have for each $g\in L^2(G)$ that
\[ P(u\cdot (g\circ f))= u\cdot (Q(g)\circ f).\]
For an $N$-tuple $s=(s_1,\dots, s_N)$ of positive integers,
let $\Phi^s$ be the operator on $G$   as constructed in Lemma~\ref{lem-bell}, and set $g_s= \Phi^s h$. 
Then $Qg_s=h$, and we have
\[ u \cdot (h\circ f)=  P(u\cdot (g_s\circ f))\]
Since $D$ satisfies Condition R, it follows that
there is an integer $k$ such that $P$ maps $\mathcal{C}^k(\overline{D})$ into $\mathcal{C}^\ell (\overline{D})$.
Therefore, to prove the result, it suffices to show that there is a tuple $s$ such that $u\cdot (g_s\circ f)\in \mathcal{C}^k(\overline{D})$.
It will be sufficient to show that derivatives of order $k+1$ of the function  $u\cdot (g_s\circ f)$ on $D$ are all bounded.

\medskip

Denote the map $f$ in components as $f=(f_1,\dots,f_n)$, where
each $f_j$ is complex valued on $D$.  Note that each $f_j$ is bounded. Consequently we have the Cauchy estimates
\[ \abs{D^\alpha f_j(z)} \lesssim \dist(z,b D)^{-\abs{\alpha}},
\]
and 
\[ \abs{D^\alpha u(z)} \lesssim \dist(z,b D)^{-\abs{\alpha} - n}.
\]
We will take the tuple $s$ be to of the form $s=(\sigma,\dots,\sigma)$, i.e., all $N$  entries are equal to the same
positive integer $\sigma$. If $N\sigma>\abs{\alpha}$, and $w\in G$, by Lemma~\ref{lem-bell} we have an estimate
\[ \abs{D^\alpha(g_s(w))}\lesssim  \dist(w, bG)^{N\sigma - \abs{\alpha}}.\]
Note that we have
\[ D^\alpha(g_s\circ f)(z) = \sum D^\beta g_s(f(z)) D^{\delta_1} f_{i_1}D^{\delta_2}f_{i_2} \dots D^{\delta_p} f_{i_p},\]
where the sum extends over all tuples $1\leq i_1,\dots, i_p\leq n$, and multi-indices $\beta$, $\delta_1,\dots, \delta_p$
with $\abs{\beta}\leq \abs{\alpha}$ and $\abs{\delta_1}+\dots+\abs{\delta_p}=\abs{\alpha}$. Therefore we have
the estimate
\begin{align*} \abs{D^\alpha(g_s\circ f)(z) } &\lesssim \dist(f(z),bG)^{N\sigma-\abs{\alpha}}\cdot \dist(z, bD)^{-(\abs{\alpha}+1)}\\
& \lesssim  \dist(z, bD)^{\delta(N\sigma-\abs{\alpha})} \cdot \dist(z, bD)^{-(\abs{\alpha}+1)}\\
& \lesssim\dist(z, bD)^{\delta N \sigma-(1+\delta)\abs{\alpha}-1},
\end{align*}
where in the second line, the right half of the  distortion estimate from Proposition~\ref{prop-distest} has been used. It follows that by taking $\sigma$ 
to be sufficiently large, we can make the function $ D^\alpha(g_s\circ f)$ vanish to arbitrarily high order on the boundary  $bD$.
Using the Cauchy estimates on the derivatives of $u$, and the Leibniz rule for the derivative of a product it now follows that
by taking $\sigma$ sufficiently large, we can ensure that for any multi-index $\alpha$, the derivative $D^\alpha(u\cdot (g_s\circ f))$ 
is bounded on $D$. Consequently there is an $N$-tuple $s$ such that $u\cdot (g_s\circ f)\in \mathcal{C}^k(\overline{D})$, and our result is proved.
\end{proof}

\subsection{Smoothness to the boundary of symmetric functions of the inverse branches}
\begin{prop} If $G$ is as in Theorem~\ref{thm-proper},  i.e., $G$ is a product of smoothly bounded
domains, then for $h\in \mathcal{H}^\infty(D)$, an elementary symmetric function of $h\circ F_1, h\circ F_2,\dots, h\circ F_m$
(defined on $G\setminus Z$) extends to a function in $\mathcal{H}^\infty(G)$.
\end{prop}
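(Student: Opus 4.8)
The plan is to adapt the transformation-formula technique of Bell and Bell--Ligocka: reduce the statement about elementary symmetric functions to one about power sums, and then transfer boundary regularity from the source $D$ to the target $G$. First I would reduce to power sums. By Newton's identities each elementary symmetric function of $h\circ F_1,\dots,h\circ F_m$ is a universal polynomial in the power sums $p_\ell(w)=\sum_{j=1}^m h(F_j(w))^\ell$, and since $\mathcal{H}^\infty(G)$ is an algebra it suffices to show $p_\ell\in\mathcal{H}^\infty(G)$ for every $\ell$. I record two consequences of Lemma~\ref{lem-jacobian}: taking $h\equiv 1$ gives $u:=\det(f')\in\mathcal{C}^\infty(\overline D)$, so that $u\,h^\ell\in\mathcal{H}^\infty(D)$; and since $\det(F_j')=1/(u\circ F_j)$ on $G\setminus Z$, the power sum equals the Jacobian-weighted trace
\[ p_\ell(w)=\sum_{j=1}^m \det(F_j'(w))\,\big(u\,h^\ell\big)(F_j(w))=:\Lambda(u\,h^\ell)(w). \]
Thus it is enough to prove that $\Lambda(\psi)\in\mathcal{H}^\infty(G)$ when $\psi=u\,h^\ell\in\mathcal{H}^\infty(D)$.

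Second, I would establish holomorphic extension across the branch locus. Each $p_\ell$ is symmetric in the branches, hence single-valued and holomorphic on $G\setminus Z$, and it is bounded there because $h$ is bounded and there are only $m$ branches. Since $Z$ is a complex hypersurface, the Riemann removable singularity theorem gives $p_\ell\in\mathcal{O}(G)\cap L^\infty(G)$.

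Third comes the heart of the matter: smoothness up to $\overline{G}$. Here I would use Bell's transformation law $P_D\big(u\cdot(\eta\circ f)\big)=u\cdot\big((Q_G\eta)\circ f\big)$ from the proof of Lemma~\ref{lem-jacobian}, whose kernel form $u(z)K_G(f(z),w)=\sum_j \overline{\det F_j'(w)}\,K_D(z,F_j(w))$ yields the integral representation
\[ p_\ell(w)=\int_D \abs{u(\zeta)}^2\,K_G\big(w,f(\zeta)\big)\,h(\zeta)^\ell\,dV(\zeta). \]
To create decay I would feed in the operators $\Phi^s$ of Lemma~\ref{lem-bell}: using $P\Phi^s=P$ together with the intertwining $\Lambda\circ P_D=Q_G\circ\Lambda$, I may write $p_\ell=Q_G(v)$ with $v=\Lambda\big(\Phi^s(u\,h^\ell)\big)=\sum_j\det(F_j')\cdot\Phi^s(u\,h^\ell)\circ F_j$. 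The factor $\Phi^s(u\,h^\ell)$ vanishes to order $\abs{s}$ on $bD$, while $\det(F_j')$ blows up only polynomially near $bG$; by the distortion estimate of Proposition~\ref{prop-distest} the vanishing dominates the blow-up for $\abs{s}$ large, and the standard fact that a smooth function even in the local branching coordinate descends to a smooth function of $w$ handles the interior locus $Z$. Hence $v\in\mathcal{C}^\infty(\overline G)$, and it remains only to pass this smoothness through the Bergman projection $Q_G$.

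The step I expect to be the main obstacle is precisely this last one: concluding that $p_\ell=Q_G v$ is smooth up to $\overline G$, which in its bare form is the kernel estimate \eqref{eq-kderestimate} for $G$, i.e. Condition R on $G$ --- and this is \emph{not} among the hypotheses. The plan is to circumvent it by exploiting that $G=G_1\times\dots\times G_k$ is a product of smoothly bounded domains, so that $K_G=\prod_i K_{G_i}$ and $Q_G$ factors as the tensor product of the $Q_{G_i}$; the analysis then localizes, near a point of $b\reg{G}$, to a single smoothly bounded factor with the remaining variables interior, where the combination of \eqref{eq-cauchy-schwarz}, \eqref{eq-compare} and the distortion estimate can be brought to bear. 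The genuinely delicate point is to balance the order of vanishing supplied by $\Phi^s$ against the order of blow-up of the kernel derivatives near $bG$ --- estimating the integrand by absolute values alone would diverge --- and to carry this balance uniformly up to the corners $b\sng{G}$; this is exactly where the product hypothesis on $G$ (and the rigidity of proper maps into products, as exploited in \cite{prop}) is essential.
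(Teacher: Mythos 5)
Your outline correctly reproduces the skeleton of the classical argument of Bell \cite{Be1} that the paper invokes through \cite{prop}: the reduction to power sums, the identity $p_\ell=\Lambda(u\,h^\ell)$ with $\Lambda\phi=\sum_j\det F_j'\cdot(\phi\circ F_j)$, the insertion of the operator $\Phi^s$ of Lemma~\ref{lem-bell} via $\Lambda\circ P_D=Q_G\circ\Lambda$, and the removable-singularity argument across $Z$ are all the right first moves. One caveat before the main point: your assertion that $v=\Lambda\bigl(\Phi^s(u\,h^\ell)\bigr)$ lies in $\mathcal{C}^\infty(\overline{G})$ is not justified, since $\Phi^s(u\,h^\ell)$ is not holomorphic and nothing removes the singularity of $v$ along $Z$, where $\det F_j'=1/(u\circ F_j)$ blows up; the remark about descending through a branching coordinate does not address this. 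That part is repairable --- $\Lambda$ is bounded on $L^2$, so $v\in L^2(G)$ and only its decay at $bG$ matters --- but it should be argued, not asserted.

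The genuine gap is the final step, which you flag honestly but whose proposed resolution would fail. Passing the decay of $v$ at $bG$ through $Q_G$ is, as you say, the kernel estimate \eqref{eq-kderestimate} for $G$, equivalently (Proposition~\ref{prop-condnr}) Condition R for $G$ --- and this is not a hypothesis. Your plan to extract it from the product factorization $K_G=\prod_i K_{G_i}$ together with \eqref{eq-cauchy-schwarz} and \eqref{eq-compare} cannot succeed: those two inequalities hold on \emph{every} bounded domain and bound $\abs{K_{G_i}(w_i,\zeta_i)}$ by negative powers of \emph{both} $\dist(w_i,bG_i)$ and $\dist(\zeta_i,bG_i)$, with Cauchy estimates in $w_i$ only worsening the blow-up in the free variable, whereas \eqref{eq-kderestimate} demands a bound uniform in $w\in G$; no order of vanishing of $v(\zeta)$ compensates for a kernel bound that degenerates as $w\to bG$. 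Since \eqref{eq-kderestimate} is genuinely false for some smoothly bounded pseudoconvex domains (hence for products having such a factor), no argument built only from \eqref{eq-cauchy-schwarz}, \eqref{eq-compare} and the distortion estimate can produce it. The difficulty is real and not merely technical: Barrett's example \cite{Ba0} of biholomorphic smooth domains with inequivalent boundaries, with the source satisfying Condition R, shows that for a general smoothly bounded target the conclusion of the Proposition can fail already for $m=1$ and $h$ a coordinate function. Your closing appeal to the rigidity of proper maps into products is also unavailable here, since the splitting of such maps requires the source to be a product, which is precisely the hypothesis this Proposition drops (the paper notes that the product structure of $D$ enters only from \cite[Lemma~5.7]{prop} onward). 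So the step where the cited proof of Bell and \cite[Proposition~5.3]{prop} does its actual work is deferred rather than supplied.
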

\begin{proof}
For smoothly bounded domains $D$ and $G$, this is a classical result of Bell (see \cite{Be1}.) It was shown in \cite[Proposition~5.3]{prop} that the same arguments, with minor modifications work when each of $D$ and $G$ is a product of  smoothly
 bounded domains.  We note here that the proof given in \cite{prop} actually works in the more general situation when $D$ is merely a domain with generic corners and is not necessarily a product.
\end{proof}

\section{Proof of Theorem~\ref{thm-biholo}}

\subsection*{1$\Rightarrow$2} Since $f$ maps $bD$ diffeomorphically to $bG$, it follows that $G$ must have piecewise smooth boundary.
Since the map $f$ is CR on each of the manifolds constituting $bD$, it follows that  $G$ is a domain with generic corners.

Let $g=f^{-1}$, and let $K_G(z,w)$ and $K_D(Z,W)$ denote the Bergman kernels on the domains $G$ and $D$ respectively. Since by hypothesis,
$D$ satisfies Condition R, it follows from Proposition~\ref{prop-condnr} that for each multi-index $\alpha$, there is an $m_\alpha$ such that 
 we have an estimate
\[ \abs{\left(\frac{\partial}{\partial W}\right)^\alpha K_D(W,Z)}\lesssim \dist(Z, bD)^{-m_\alpha},\]
valid for all $(W,Z)\in D\times D$. The kernels $K_D$ and $K_G$  are related by the classical
formula (\cite{Be1} or \cite[Proposition~6.1.7]{jp})
\[ K_G(w,z)= \det g'(w) K_D\left(g(w), g(z)\right) \overline{\det g'(z)}.\]
Therefore
\[\left(\frac{\partial}{\partial w}\right)^\alpha  K_G(w,z)= \left( \left(\frac{\partial}{\partial w}\right)^\alpha \det g'(w) K_D\left(g(w), g(z)\right)\right)\cdot \overline{\det g'(z)}.\]
Since $g$ is smooth up to the boundary,  
\begin{equation}\label{eq-gprimeest} \abs{\det g\rq{}(z)}\lesssim 1.\end{equation}

\noindent Again, since $g$ is smooth up to the boundary, by repeated application of the chain rule and the product rule we obtain 
\begin{align*}
\abs{ \left(\frac{\partial}{\partial w}\right)^\alpha \left(\det g'(w) K_D\left(g(w), g(z)\right)\right)} &\lesssim \sum_{\abs{\beta}\leq \abs{\alpha}} 
\abs{\left(\frac{\partial}{\partial W}\right)^\beta K_D(g(w),g(z))} \\
& \lesssim \sum_{\abs{\beta}\leq \abs{\alpha}} \dist(g(z), bD)^{-m_\beta}\\
&\lesssim \dist(g(z), bD)^{-M} \text{\quad ($M$ being the largest of the $m_\beta$\rq{}s)}\\
&\lesssim \dist(z, bG)^{-\frac{M}{\delta}},
\end{align*}
where in the last line we have used Proposition~\ref{prop-distest}.   Combining this with \eqref{eq-gprimeest}, and invoking
Proposition~\ref{prop-condnr} our result follows.

\subsection*{2$\Rightarrow$1} 

Taking $h\equiv 1$ in Lemma~\ref{lem-jacobian}, we see that $u\in \mathcal{C}^\infty(\overline{D})$.
Applying the lemma again to the mapping $f^{-1}:G\to D$, we obtain that $\det ((f^{-1})\rq{})\in \mathcal{C}^\infty(\overline{G})$. But this implies that
$u^{-1}\in \mathcal{C}^\infty(\overline{D})$. It therefore follows that for each holomorphic $h$ on $G$ such that 
$h\in \mathcal{C}^\infty(\overline{G})$, we have that $h\circ f\in \mathcal{C}^\infty(\overline{D})$. Taking $h$ to be the coordinate functions $z\mapsto z_j$ from $D$ to $\cx$,
we see that each  component of $f$  extends smoothly to the boundary, and the result is proved.

\section{Proof of Theorem~\ref{thm-proper}} 
\noindent The proof of  Theorem~\ref{thm-proper} is for most part identical to the first part of the argument for the proof of  \cite[Theorem~1.1]{prop},
where it is further assumed that $D$ is also  a product domain. We review the main steps of the proof below, noting in each step that the 
hypothesis of product structure is not really used in the proof of continuous extension to the boundary. (It does become relevant in the latter
part of the proof of \cite[Theorem~1.1]{prop}, i.e.,  Lemma~5.7 onward.) What is important is that $D$ is piecewise smooth, pseudoconvex, 
satisfies Condition R, and there is a Bell operator on $D$.

\medskip

As in Lemma~\ref{lem-jacobian},  let $u= \det(f')$ be  the Jacobian determinant of  the map $f:D\to G$. We claim that $u$ vanishes to at most finite order 
at each point of $\partial D$. For smoothly bounded domains, the proof can be found in \cite{Be2,BL}.  It was shown in \cite[Lemma~5.5]{prop} 
that essentially the same argument continues to work for the piecewise smooth domains considered here.

\medskip

From this, as in  \cite[Lemma~5.6]{prop}, it follows that $f$ extends to a continuous map from $\overline{D}$ to $\overline{G}$. The key ingredient 
here is the weak division result \cite[Lemma~10]{DF}  which states the following: On a smoothly bounded domain $\Omega \subset \mathbb C^n$, let $u \in \mathcal 
H^{\infty}(\Omega)$ be a function that does not vanish to infinite order at any point on $b \Omega$. If $h$ is a bounded holomorphic function on $\Omega$ such that $u \cdot h^N \in 
\mathcal H^{\infty}(\Omega)$ for all $N \ge 1$, then $h$ is continuous on $\overline \Omega$. To prove that $h$ is continuous at $p \in b \Omega$, the only geometric requirement is the 
existence of a 
complex line through $p$ that enters $\Omega$ near $p$ and which is transverse to $b \Omega$ near $p$. This condition is clearly satisfied at all points of $b \reg{D}$ while at the 
generic corners such a complex line may be chosen to be transverse to the tangent cone to $b \Omega$ at such points. Thus the proof of \cite[Lemma~5.6]{prop} carries over to the case of 
domains with generic corners. To show that $f$ is smooth at all points of $b \reg{D}$, the finite order vanishing of $u$ at the boundary can be combined with the strong form of the 
division theorem (which is a local statement) as in \cite{BC} or \cite{DF}. This completes the proof of Theorem 1.3.


\end{document}